\theoremstyle{plain}
\newtheorem{thm}{Theorem}[section]
\newtheorem{lem}[thm]{Lemma}
\newtheorem{cor}[thm]{Corollary}
\newtheorem{prop}[thm]{Proposition}
\theoremstyle{definition}
\newtheorem{defn}[thm]{Definition}
\newtheorem{rmk}[thm]{Remark}
\newcommand{\im}{{\rm im}}
\newcommand{\Gal}{{\rm Gal}}
\newcommand{\sC}{{\mathcal C}}
\newcommand{\sE}{{\mathcal E}}
\newcommand{\sG}{{\mathcal G}}
\newcommand{\sM}{{\mathcal M}}
\newcommand{\C}{{\mathbb C}}
\newcommand{\F}{{\mathbb F}}
\newcommand{\Q}{{\mathbb Q}}
\newcommand{\R}{{\mathbb R}}
\newcommand{\U}{{\mathbb U}}
\newcommand{\Z}{{\mathbb Z}}
\def\NDT{{\fontencoding{T5}\selectfont Nguy\~ \ecircumflex n Duy T\^an}}
\begin{document}
\title{Triple Massey Products over global fields} 

 \author{ J\'an Min\'a\v{c} and \NDT}
\address{Department of Mathematics, Western University, London, Ontario, Canada N6A 5B7}
\email{minac@uwo.ca}
 \address{Department of Mathematics, Western University, London, Ontario, Canada N6A 5B7 and Institute of Mathematics, Vietnam Academy of Science and Technology, 18 Hoang Quoc Viet, 10307, Hanoi - Vietnam } 
\email{dnguy25@uwo.ca}
\thanks{JM is partially supported  by the Natural Sciences and Engineering Research Council of Canada (NSERC) grant R0370A01. NDT is partially supported  by the National Foundation for Science and Technology Development (NAFOSTED) grant 101.04-2014.34}
 \begin{abstract}
 Let $K$ be a global field which contains a primitive $p$-th root of unity, where $p$ is a prime number. M. J. Hopkins and K. G. Wickelgren showed that for $p=2$, any triple Massey product over $K$ with respect to $\F_p$, contains 0 whenever it is defined. We show that this is true for all primes $p$.
\end{abstract}
\maketitle
\section{Introduction}
Massey products were introduced by W. S. Massey in \cite{M}. (We recall the definition in  Section 2.) Massey products were first used in topology where usual cohomology cup products would not detect some linking properties of knots but Massey products would.  (See for example \cite[page 98]{Mo} or \cite[pages 154-158]{GM}.) Further interest in Massey products arises as an obstruction to the "formality" of manifolds over real numbers.  In the case of compact K\"ahler manifolds, formality formalizes the property that their homotopy type is a formal consequence of their real cohomology ring. (See \cite{DGMS}.) We treat Massey products also as obstructions to solving certain Galois embedding problems.

Throughout this paper, we let $p$ be a prime number.  Let $K$ be a field which we assume  contains a fixed primitive $p$-th root of unity $\zeta_p$. Let  $G_K$ be the absolute  Galois group of $K$. Let $\sC^\bullet =\sC^\bullet(G_K,\F_p)$ denote the differential graded algebra of $\F_p$-inhomogeneous cochains in continuous group cohomology of $G_K$ (see e.g., \cite[Chapter I, \S 2]{NSW}).  For any $a\in K^\times=K\setminus \{0\}$, let $\chi_a$ denote the corresponding character  via the Kummer map $K^\times \to H^1(G_K,\F_p)$, i.e., $\chi_a$ is defined by $ \sigma (\sqrt[p]{a})= \zeta_p^{\chi_a(\sigma)} \sqrt[p]{a},$ for all $\sigma\in G_K$. 
In the work of M. J. Hopkins and K. G. Wickelgren \cite{HW}, the following fundamental result was proved. (By a {global field} we mean a finite extension of $\Q$, or a function field in one variable over a finite field.)

\begin{thm}[{\cite[Theorem 1.2]{HW}}]
\label{thm:HW} Let the notation be as above. Assume that $p=2$ and 
 $K$ is a global field of characteristic $\not=2$. Let $a,b,c\in K^\times$. The triple Massey product $\langle \chi_a,\chi_b,\chi_c\rangle$ contains 0 whenever it is defined.
\end{thm}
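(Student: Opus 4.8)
The plan is to translate the assertion into a local--global statement about the $p$-torsion of $\mathrm{Br}(K)$, and then to deduce it from the Albert--Brauer--Hasse--Noether theorem together with Chebotarev's density theorem; this argument is uniform in $p$, so I would prove the statement for every prime $p$ (under the standing hypotheses $\zeta_p\in K$ and $\mathrm{char}\,K\ne p$), recovering the case $p=2$. Using $\zeta_p$ to identify $\mu_p$ with $\F_p$ as a $G_K$-module, and likewise over every completion $K_v$, one has $H^1(G_K,\F_p)\cong K^\times/(K^\times)^p$ and $H^2(G_K,\F_p)\cong\mathrm{Br}(K)[p]$. Recall that once $\langle\chi_a,\chi_b,\chi_c\rangle$ is defined, i.e.\ once $\chi_a\cup\chi_b=0=\chi_b\cup\chi_c$, the set $\langle\chi_a,\chi_b,\chi_c\rangle$ is a single coset in $H^2(G_K,\F_p)$ of the indeterminacy subgroup $I_K:=\chi_a\cup H^1(G_K,\F_p)+H^1(G_K,\F_p)\cup\chi_c$, and that restriction carries it into the analogous local coset (defined, since cup products vanish after restriction). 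Hence $0\in\langle\chi_a,\chi_b,\chi_c\rangle$ if and only if the image of this coset in $H^2(G_K,\F_p)/I_K$ is zero, and it suffices to prove: (a) the local triple Massey product contains $0$ at every place $v$; and (b) the natural map $H^2(G_K,\F_p)/I_K\to\bigoplus_v H^2(G_{K_v},\F_p)/I_{K_v}$ is injective. Granting these, a fixed value $\omega\in\langle\chi_a,\chi_b,\chi_c\rangle$ satisfies $\omega|_v\in I_{K_v}$ for all $v$ by (a), hence $\omega\in I_K$ by (b), as desired. (By Dwyer's theorem the statement $0\in\langle\chi_a,\chi_b,\chi_c\rangle$ is also the solvability of a central embedding problem for $G_K$ with kernel $\F_p$, but the coset formulation seems cleaner here.)

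For (a), fix $v$. If $a$ or $c$ becomes a $p$-th power in $K_v$ then $\chi_a|_v=0$ or $\chi_c|_v=0$ and $0$ lies in the local product by inspection of a defining system. Otherwise $\chi_a|_v\ne 0$; since $H^2(G_{K_v},\F_p)\cong\mathrm{Br}(K_v)[p]$ is cyclic of order $\le p$ and the cup product $H^1(G_{K_v},\F_p)\times H^1(G_{K_v},\F_p)\to H^2(G_{K_v},\F_p)$ is non-degenerate (local Tate duality; the archimedean places, where $G_{K_v}=\Z/2$ for $K_v=\R$, are checked directly), the map $\chi_a|_v\cup(-)$ already surjects onto $H^2(G_{K_v},\F_p)$. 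Thus $I_{K_v}=H^2(G_{K_v},\F_p)$, so $0$ lies in the local product.

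The step I expect to be the real obstacle is (b). Set $L=K(\sqrt[p]{a})$ and $M=K(\sqrt[p]{c})$ (the cases $a\in(K^\times)^p$ or $c\in(K^\times)^p$ being immediate). By the theory of cyclic algebras, inside $\mathrm{Br}(K)[p]$ one has $\chi_a\cup H^1(G_K,\F_p)=\mathrm{Br}(L/K)$ and $\chi_c\cup H^1(G_K,\F_p)=\mathrm{Br}(M/K)$, so $I_K=\mathrm{Br}(L/K)+\mathrm{Br}(M/K)$, and the analogous description holds over every $K_v$. From the reciprocity sequence $0\to\mathrm{Br}(K)\to\bigoplus_v\mathrm{Br}(K_v)\to\Q/\Z\to0$ and the fact that $[L:K]=[M:K]=p$ is prime, one reads off that $I_{K_v}$ equals all of $\mathrm{Br}(K_v)[p]$ unless $v$ splits completely in both $L$ and $M$, in which case $I_{K_v}=0$. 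Therefore, for $\omega\in\mathrm{Br}(K)[p]$, the hypothesis ``$\omega|_v\in I_{K_v}$ for all $v$'' says exactly that $\mathrm{inv}_v(\omega)=0$ at every $v$ splitting completely in both $L$ and $M$, and I must conclude that $\omega\in\mathrm{Br}(L/K)+\mathrm{Br}(M/K)$. By Chebotarev applied to $\mathrm{Gal}(LM/K)\cong(\Z/p)^2$ (the case $L=M$ being handled directly), there is a place $v_0$ splitting in neither $L$ nor $M$, since the two index-$p$ subgroups cut out by $L$ and $M$ do not cover the group. One then prescribes local classes $\xi_v\in\mathrm{Br}(K_v)[p]$ that vanish wherever $v$ splits in $L$, agree with $\mathrm{inv}_v(\omega)$ wherever $v$ splits in $M$ but not in $L$, and are arbitrary elsewhere, chosen so that $\sum_v\xi_v=0$ (adjusting the value at $v_0$); then $\eta_v:=\mathrm{inv}_v(\omega)-\xi_v$ vanishes wherever $v$ splits in $M$ and also satisfies $\sum_v\eta_v=0$. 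By Albert--Brauer--Hasse--Noether these local data come from global classes $\xi\in\mathrm{Br}(L/K)$ and $\eta\in\mathrm{Br}(M/K)$ with $\xi+\eta=\omega$, which proves (b). Here lies the heart of the matter: the reduction is formal and the local vanishing is elementary, but converting ``lying in the indeterminacy locally everywhere'' into ``lying in the indeterminacy globally'' requires precisely the reciprocity sequence for the Brauer group together with the Chebotarev input needed to satisfy the attendant sum-zero constraints --- and it is this, rather than anything special to the case $p=2$, that makes the theorem hold for all primes $p$.
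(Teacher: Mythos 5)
Your proof is correct, but it follows a genuinely different route from the one in this paper. You exploit the fact that a defined triple Massey product is a full coset of the indeterminacy subgroup $I_K=\chi_a\cup H^1+H^1\cup\chi_c$, identify $I_K$ with $\mathrm{Br}(L/K)+\mathrm{Br}(M/K)$ for $L=K(\sqrt[p]{a})$, $M=K(\sqrt[p]{c})$, kill the local obstruction by non-degeneracy of the local cup product, and then prove a Hasse principle for sums of two relative Brauer groups of degree-$p$ cyclic extensions via the reciprocity sequence and a Chebotarev argument supplying the auxiliary place $v_0$ at which the sum-zero constraint can be absorbed; all of these steps check out, including the bookkeeping of the local invariants and the covering argument for the two index-$p$ subgroups of $\mathrm{Gal}(LM/K)$. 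This is essentially the strategy of Hopkins--Wickelgren (local non-degeneracy plus a Hasse principle, there obtained from Leep--Wadsworth via a splitting variety), streamlined by working directly in the Brauer group, and it is uniform in $p$. The paper instead reformulates the vanishing via Dwyer's theorem as a weak embedding problem for $\U_4(\F_p)$ with abelian kernel $A\cong\F_p^3$, applies Hoechsmann's lemma to reduce to the vanishing of a class in $H^2(G_K,A)$, and proves injectivity of the localization map $H^2(G_K,A)\to\prod_v H^2(G_{K_v},A)$ by Poitou--Tate duality together with the computation $H^1_*(\sG,\F_p^3)=0$; the local case is handled by the Demushkin-group theorem of \cite{MT}. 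Your approach is more elementary (no Poitou--Tate for non-trivial coefficient modules, and an elementary local step) but is tied to the coset structure special to triple products; the paper's embedding-problem formulation is what yields Corollary~\ref{cor:U4} on $\U_4(\F_p)$-extensions and a local statement for all $n$-fold products, and its local--global lemma is stated for the module $A$ rather than for the specific indeterminacy subgroups.
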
 
 
In \cite{MT} we extend the result of Hopkins-Wickelgren to an arbitrary field $K$ of characteristic $\not=2$, still assuming that $p=2$. 

\begin{thm}[{\cite[Theorem 1.2]{MT}}]
\label{thm:vanishing} Let the notation be as above. Assume that $p=2$ and 
 $K$ is an arbitrary field of characteristic $\not=2$. Let $a,b,c\in K^\times$.
 The triple Massey product $\langle \chi_a,\chi_b,\chi_c\rangle$ contains 0 whenever it is defined.
\end{thm}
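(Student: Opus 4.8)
The plan is to run everything through the standard dictionary between triple Massey products and $U_4(\F_2)$-embedding problems. Because the product is defined, one can choose $1$-cochains $a_{13},a_{24}\colon G_K\to\F_2$ with $\partial a_{13}=\chi_a\cup\chi_b$ and $\partial a_{24}=\chi_b\cup\chi_c$, and the resulting value of $\langle\chi_a,\chi_b,\chi_c\rangle$ is the class of the $2$-cocycle $a_{13}\cup\chi_c+\chi_a\cup a_{24}$; equivalently such a pair is the data of a continuous homomorphism $G_K\to U_4(\F_2)/Z$ sitting over the superdiagonal characters $(\chi_a,\chi_b,\chi_c)$ (here $U_4(\F_2)$ has order $64$ and $Z\cong\F_2$ is its centre), and $0\in\langle\chi_a,\chi_b,\chi_c\rangle$ precisely when one such homomorphism lifts to $G_K\to U_4(\F_2)$. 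So it suffices to produce $a_{13},a_{24}$ as above with $a_{13}\cup\chi_c+\chi_a\cup a_{24}$ a coboundary; replacing $a_{13}$ by $a_{13}+\chi_d$ and $a_{24}$ by $a_{24}+\chi_e$ changes its class by $\chi_d\cup\chi_c+\chi_a\cup\chi_e$, so it is enough to show that for one choice of $a_{13},a_{24}$ the class of $a_{13}\cup\chi_c+\chi_a\cup a_{24}$ lies in $\chi_a\cup H^1(G_K,\F_2)+\chi_c\cup H^1(G_K,\F_2)$.

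Before the main argument I would dispose of the degeneracies. If one of $a,b,c$ is a square (so $\chi_a$, $\chi_b$, or $\chi_c$ is the zero homomorphism), then taking the relevant cochains to be $0$ shows $0\in\langle\chi_a,\chi_b,\chi_c\rangle$ at once; so assume $a,b,c\notin (K^\times)^2$. Next I would treat the collinear configurations $ab\in (K^\times)^2$ and $bc\in (K^\times)^2$ separately: since $\chi_a\cup\chi_a=\chi_{-1}\cup\chi_a$, in the first case the hypothesis $\chi_a\cup\chi_b=0$ reads $(a,-1)=0$ in the Brauer group, which gives an explicit representation of $-1$ as a value of the norm form of $K(\sqrt a)$, to be fed into the construction below with minor modifications. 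This leaves the main case, where $a,b,c,ab,bc$ are all nonsquares.

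In the main case I would translate the hypotheses by Kummer theory. The vanishing $\chi_a\cup\chi_b=0$ says the quaternion algebra $(a,b)$ splits, so $b=N_{K(\sqrt a)/K}(\beta)$ for some $\beta=x+y\sqrt a$ with $x^2-ay^2=b$ and $y\neq0$; similarly $\chi_b\cup\chi_c=0$ gives $b=N_{K(\sqrt c)/K}(\gamma)$ with $\gamma=u+v\sqrt c$, $u^2-cv^2=b$, $v\neq0$. A norm presentation $b=N_{K(\sqrt a)/K}(\beta)$ trivialises the $2$-cocycle $\chi_a\cup\chi_b$ explicitly, i.e.\ produces a cochain $a_{13}$ with $\partial a_{13}=\chi_a\cup\chi_b$ (together with a $U_3(\F_2)\cong D_4$-extension of $K$ realising it); likewise $\gamma$ produces $a_{24}$. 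The plan is then to compute the class of $a_{13}\cup\chi_c+\chi_a\cup a_{24}$ out of $\beta$ and $\gamma$ and to check it lies in $\chi_a\cup H^1(G_K,\F_2)+\chi_c\cup H^1(G_K,\F_2)$ — if necessary only after replacing $\beta$ by $\beta d$ and $\gamma$ by $\gamma e$ for suitably chosen $d,e\in K^\times$, which is exactly the indeterminacy of the defining system. Dually, this amounts to welding the two norm presentations of the single element $b$ into one $U_4(\F_2)$-Galois extension of $K$: a tower $K\subset K(\sqrt a)\subset\cdots$ of at most four quadratic layers, obtained by adjoining $\sqrt a$, a square root manufactured from $\beta$, $\sqrt c$, and a square root manufactured from $\gamma$, and checking that it is Galois over $K$ with group mapping onto $U_4(\F_2)$ compatibly with $\chi_a,\chi_b,\chi_c$.

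The main obstacle is precisely this last step: choosing the correct square roots in the top two layers (equivalently the correct cochains $a_{13},a_{24}$ and correcting characters $\chi_d,\chi_e$) so that the obstruction $2$-cocycle becomes a coboundary, and then verifying the cocycle and Galois-descent identities; the algebra is routine once the ansatz is found, but arranging that it works uniformly in the main case and in the collinear sub-cases — where some of the square roots manufactured above degenerate — is the delicate point. If the direct cochain computation becomes unwieldy, I would instead pass to the Milnor-conjecture description of mod-$2$ Galois cohomology by $I^n/I^{n+1}$ and recast the desired membership as the statement that the obstruction decomposes in $I^2/I^3$ as the sum of a $2$-fold Pfister class with slot $a$ and one with slot $c$, carrying out the computation in the Witt ring instead.
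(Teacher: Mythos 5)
This paper does not actually prove Theorem~\ref{thm:vanishing}: it is imported verbatim from \cite{MT}, and the machinery developed here (Proposition~\ref{prop:local-global}, Lemma~\ref{lem:local-global}, Poitou--Tate) is a local-global argument that only makes sense over global fields, so it cannot give the statement for arbitrary $K$. Your framework is nevertheless the right one, and it is in essence the strategy of the cited proof: Dwyer's correspondence identifies defining systems with lifts to $\U_4(\F_2)/Z_4(\F_2)$, the indeterminacy of a defined triple product is exactly $\chi_a\cup H^1(G_K,\F_2)+\chi_c\cup H^1(G_K,\F_2)$, and the defining hypotheses $\chi_a\cup\chi_b=\chi_b\cup\chi_c=0$ translate into norm presentations $b=N_{K(\sqrt a)/K}(\beta)=N_{K(\sqrt c)/K}(\gamma)$. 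The reduction to showing that one value of the product lies in the indeterminacy subgroup is also correct.

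The genuine gap is that you stop exactly where the theorem lives. ``Compute the class of $a_{13}\cup\chi_c+\chi_a\cup a_{24}$ out of $\beta$ and $\gamma$ and check it lies in $\chi_a\cup H^1+\chi_c\cup H^1$, if necessary after replacing $\beta$ by $\beta d$ and $\gamma$ by $\gamma e$'' is a restatement of the theorem, not an argument for it: there is no a priori reason this class should decompose in that way, and no candidate for $a_{13}$, $a_{24}$, $d$, $e$ (equivalently, for the top quadratic layers of the would-be $\U_4(\F_2)$-tower) is ever written down. The published proof hinges on a specific ansatz --- an explicit element of $K(\sqrt a)^\times$ manufactured from $\beta=x+y\sqrt a$ whose associated quadratic extension furnishes $a_{13}$, together with a corestriction/projection-formula computation identifying the resulting value as an explicit sum of cup products with first slot $a$ or $c$ --- and that identity in $H^2$ is the entire content of the result. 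Until that cocycle-level (or, as you suggest as a fallback, Witt-ring) computation is actually carried out, including the degenerate sub-cases where $ab$ or $bc$ is a square, the proposal establishes only that the theorem is equivalent to a membership statement you have not verified.
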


In this paper we extend the result of  Hopkins-Wickelgren in Theorem~\ref{thm:HW} in another direction. We still consider a global field $K$ but we let the prime $p$ be arbitrary. 
\begin{thm} 
\label{thm:main}
 Let the notation be as above. Assume that $K$ is a global field containing a primitive $p$-th root of unity and $a,b,c\in K^\times$. Then the triple Massey product $\langle \chi_a,\chi_b,\chi_c\rangle$ contains 0 whenever it is defined.
\end{thm}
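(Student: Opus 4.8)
The plan is to reformulate the condition $0\in\langle\chi_a,\chi_b,\chi_c\rangle$ as a statement about Brauer groups and then bring in the arithmetic of global fields. Put $K_a=K(\sqrt[p]{a})$, $K_c=K(\sqrt[p]{c})$ and $L=K_aK_c=K(\sqrt[p]{a},\sqrt[p]{c})$. I will use two standard facts about triple Massey products: when $\langle\chi_a,\chi_b,\chi_c\rangle$ is defined it is a coset in $H^2(G_K,\F_p)$ of the subgroup $I:=\chi_a\cup H^1(G_K,\F_p)+H^1(G_K,\F_p)\cup\chi_c$; and Massey products are functorial for restriction along a field extension. Using $\zeta_p$ to identify $H^2(G_K,\F_p)\cong\mathrm{Br}(K)[p]$, the homomorphism $\chi_a\cup(-)\colon H^1(G_K,\F_p)\to H^2(G_K,\F_p)$ is identified with $d\mapsto (a,d)$, whose image is exactly the relative Brauer group $\mathrm{Br}(K_a/K)$ (equal to $0$ when $a\in(K^\times)^p$), and similarly for $\chi_c$. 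Hence $I=\mathrm{Br}(K_a/K)+\mathrm{Br}(K_c/K)$; fixing one defining system, with obstruction class $\alpha_0\in H^2(G_K,\F_p)$, the theorem becomes the assertion $\alpha_0\in\mathrm{Br}(K_a/K)+\mathrm{Br}(K_c/K)$.

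The first step is to note that $\alpha_0$ becomes trivial over $L$. Since $a$ and $c$ are $p$-th powers in $L$, the characters $\chi_a$ and $\chi_c$ restrict to $0$ on $G_L$, so by functoriality $\mathrm{res}_{L/K}(\alpha_0)$ lies in the triple Massey product $\langle 0,\mathrm{res}_{L/K}\chi_b,0\rangle$ over $L$. A triple Massey product of the form $\langle 0,\psi,0\rangle$ is always $\{0\}$, because every defining system produces the zero $2$-cocycle. Therefore $\mathrm{res}_{L/K}(\alpha_0)=0$; that is, the class $\alpha_0\in\mathrm{Br}(K)[p]$ is split by $L$. This step, together with the reformulation above, works over an arbitrary field.

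It then remains to prove the following arithmetic statement, which is where global-ness enters: \emph{for a global field $K$, every class $\alpha\in\mathrm{Br}(K)[p]$ split by $L=K_aK_c$ lies in $\mathrm{Br}(K_a/K)+\mathrm{Br}(K_c/K)$}. The inclusion $\supseteq$ is clear. The cases where $a$ or $c$ is a $p$-th power, or where $K_a=K_c$, are immediate, so assume $K_a\ne K_c$ are cyclic of degree $p$ and $\Gal(L/K)\cong\Z/p\times\Z/p$. Recall that, for a cyclic degree-$p$ extension $F/K$, a class in $\mathrm{Br}(K)[p]$ lies in $\mathrm{Br}(F/K)$ precisely when its local invariant vanishes at every place of $K$ splitting completely in $F$. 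Given $\alpha$, distribute its local invariants $\mathrm{inv}_v(\alpha)\in\frac{1}{p}\Z/\Z$: set $\mathrm{inv}_v(\alpha')=0$ at the places $v$ splitting completely in $K_a$ (legitimate, as such $v$ split in $L$, forcing $\mathrm{inv}_v(\alpha)=0$), set $\mathrm{inv}_v(\alpha'')=0$ at the places splitting completely in $K_c$, and at the remaining places write $\mathrm{inv}_v(\alpha)$ as any sum $\mathrm{inv}_v(\alpha')+\mathrm{inv}_v(\alpha'')$. The two families sum, together, to $\sum_v\mathrm{inv}_v(\alpha)=0$, so to make each family sum to $0$ it suffices to correct the invariants at a single auxiliary place $v_0$ that splits completely in neither $K_a$ nor $K_c$; such a $v_0$ exists by the Chebotarev density theorem, since the union $\Gal(L/K_a)\cup\Gal(L/K_c)$ has at most $2p-1<p^2$ elements and hence is avoided by some (unramified) Frobenius class. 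By the Albert--Brauer--Hasse--Noether theorem---valid for number fields and for global function fields---these families are the local invariants of classes $\alpha',\alpha''\in\mathrm{Br}(K)[p]$ with $\alpha'\in\mathrm{Br}(K_a/K)$, $\alpha''\in\mathrm{Br}(K_c/K)$ and $\alpha=\alpha'+\alpha''$.

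I expect this last step to be the crux: it is the only point where the global hypothesis is used, and it rests on the injectivity of $\mathrm{Br}(K)\hookrightarrow\bigoplus_v\mathrm{Br}(K_v)$ together with Chebotarev. (This also indicates why the statement over arbitrary fields needs extra ideas: there is no general reason for a $p$-torsion Brauer class split by $K_aK_c$ to break up into a part split by $K_a$ and a part split by $K_c$.) The degenerate cases, the verification that the assigned local invariants are genuinely realizable, and the compatibility of the classical identifications used above with the relevant restriction and cup-product maps are routine, and I would carry them out with care.
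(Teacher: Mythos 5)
Your proposal is correct, but it follows a genuinely different route from the paper. The paper never touches the Brauer group: it converts the vanishing statement, via Dwyer's theorem, into the solvability of the weak embedding problem $\U_4(\F_p)\to\F_p^3$ for the homomorphism $(\bar\rho_{12},\bar\rho_{23},\bar\rho_{34})$ attached to a defining system; Hoechsmann's lemma turns that into the vanishing of a class in $H^2(G_K,A)$ for the explicit $3$-dimensional kernel $A$; a local--global principle for $H^2(G_K,A)$ is then proved by Poitou--Tate duality together with a hands-on computation that $H^1_*(\sG,A^\prime)=0$ for the relevant order-$p^2$ quotient; and the theorem finally reduces to the local case, settled using the Demushkin property of $G_{K_v}(p)$. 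You instead use the coset structure of the triple product to reduce everything to the relative Brauer group identity $\mathrm{Br}(K_aK_c/K)=\mathrm{Br}(K_a/K)+\mathrm{Br}(K_c/K)$, proved by redistributing local invariants via Albert--Brauer--Hasse--Noether and a Chebotarev choice of auxiliary place; this bypasses the embedding-problem machinery and the local theorem entirely, and is essentially the argument of Efrat--Matzri [EM1] mentioned in the paper's addendum. What each approach buys: yours is shorter and isolates exactly where global arithmetic enters (the decomposition of a $p$-torsion Brauer class split by a bicyclic extension), while the paper's sets up machinery that also yields the $\U_4(\F_p)$-embedding statement of Corollary~\ref{cor:U4} and a framework adaptable to $n$-fold products. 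Two small imprecisions worth fixing: the parenthetical ``such $v$ split in $L$'' is only true (and only needed) for places splitting completely in \emph{both} $K_a$ and $K_c$; and $\langle 0,\psi,0\rangle=\{0\}$ because it is a coset of the trivial indeterminacy subgroup containing the value $0$ of the zero defining system, not because every defining system yields the zero cocycle on the nose.
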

Let us denote by $\U_{4}(\F_p)$  the group of all upper-triangular unipotent $4$-by-$4$-matrices with entries in  $\F_p$.
 For a finite group $G$, by a $G$-Galois extension $L/K$, we mean a Galois extension with Galois group isomorphic to $G$. It is a classical problem to describe extensions $M/K$ which can be embedded into a $G$-Galois extension $L/K$ with a prescribed Galois group $G$. 
From Theorem~\ref{thm:main} and its local version we can deduce the following contribution to this problem when $G=\U_4(\F_p)$.
\begin{cor}
\label{cor:U4}
 Let $K$ be a local or global field containing a primitive $p$-th root of unity. Let $a,b,c\in K^\times$ and  assume that  the classes $[a],[b],[c]$ in the $\F_p$-vector space $K^\times/(K^\times)^p$ are linearly independent. Assume further that $\chi_a\cup\chi_b=\chi_b\cup\chi_c=0$ in $H^2(G_K,\F_p)$.  Then the Galois extension $K(\sqrt[p]{a},\sqrt[p]{b}, \sqrt[p]{c})/K$ can be embedded in a $\U_4(\F_p)$-Galois extension $L/K$.
\end{cor}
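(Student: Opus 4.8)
The plan is to derive Corollary~\ref{cor:U4} formally from Theorem~\ref{thm:main} (for $K$ global) together with its local analogue (for $K$ local), via the classical dictionary, due to Dwyer, between triple Massey products and homomorphisms into $\U_4(\F_p)$. Concretely: write an element of $\U_4(\F_p)$ as a unipotent matrix and let $\pi_1,\pi_2,\pi_3\colon\U_4(\F_p)\to\F_p$ be the three super-diagonal coordinates; these assemble into a surjection $\pi=(\pi_1,\pi_2,\pi_3)\colon\U_4(\F_p)\to\F_p^3$ whose kernel is the commutator subgroup $[\U_4(\F_p),\U_4(\F_p)]$, namely the matrices with vanishing super-diagonal. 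Since $\F_p^3$ is elementary abelian and $[\U_4(\F_p),\U_4(\F_p)]$ has index $p^3$, this kernel is exactly the Frattini subgroup $\Phi(\U_4(\F_p))$, so $\pi$ induces an isomorphism $\U_4(\F_p)/\Phi(\U_4(\F_p))\cong\F_p^3$. I will also use that the center $Z$ of $\U_4(\F_p)$ is the copy of $\F_p$ generated by $I+E_{14}$.

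First I would invoke Dwyer's theorem in the following form: for continuous $\chi_1,\chi_2,\chi_3\in H^1(G_K,\F_p)$, the triple Massey product $\langle\chi_1,\chi_2,\chi_3\rangle$ is defined (equivalently $\chi_1\cup\chi_2=\chi_2\cup\chi_3=0$) and contains $0$ if and only if there is a continuous homomorphism $\rho\colon G_K\to\U_4(\F_p)$ with $\pi_i\circ\rho=\chi_i$ for $i=1,2,3$; here the "defined" part corresponds to lifting the partial data to a homomorphism into $\U_4(\F_p)/Z$, and "contains $0$" to further lifting it to $\U_4(\F_p)$. Applying this with $(\chi_1,\chi_2,\chi_3)=(\chi_a,\chi_b,\chi_c)$: the hypothesis $\chi_a\cup\chi_b=\chi_b\cup\chi_c=0$ guarantees the product is defined, and Theorem~\ref{thm:main} (or its local version) guarantees it contains $0$. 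Hence there is a continuous $\rho\colon G_K\to\U_4(\F_p)$ with $\pi_1\circ\rho=\chi_a$, $\pi_2\circ\rho=\chi_b$ and $\pi_3\circ\rho=\chi_c$.

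Next I would check that $\rho$ is surjective. The composite $\pi\circ\rho\colon G_K\to\F_p^3$ equals $(\chi_a,\chi_b,\chi_c)$, and a triple of homomorphisms $G_K\to\F_p$ has image all of $\F_p^3$ precisely when $\chi_a,\chi_b,\chi_c$ are $\F_p$-linearly independent in $H^1(G_K,\F_p)$. By Kummer theory, since $\zeta_p\in K$, the map $K^\times/(K^\times)^p\to H^1(G_K,\F_p)$, $[x]\mapsto\chi_x$, is an isomorphism, so this independence is exactly the hypothesis that $[a],[b],[c]$ are linearly independent in $K^\times/(K^\times)^p$. Thus $\rho$ surjects onto the Frattini quotient $\U_4(\F_p)/\Phi(\U_4(\F_p))$, and by the Burnside basis theorem (applied to the finite $p$-group $\U_4(\F_p)$) a subgroup surjecting onto the Frattini quotient is the whole group; hence $\rho(G_K)=\U_4(\F_p)$. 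Therefore $L:=\overline{K}^{\ker\rho}$ is a Galois extension of $K$ with $\Gal(L/K)\cong\U_4(\F_p)$.

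Finally, for the containment, observe that $\pi_1\circ\rho=\chi_a$ gives $\ker\rho\subseteq\ker\chi_a$, hence $K(\sqrt[p]{a})=\overline{K}^{\ker\chi_a}\subseteq\overline{K}^{\ker\rho}=L$; similarly $K(\sqrt[p]{b}),K(\sqrt[p]{c})\subseteq L$ using $\pi_2$ and $\pi_3$. Thus $K(\sqrt[p]{a},\sqrt[p]{b},\sqrt[p]{c})\subseteq L$, and $L/K$ is the desired $\U_4(\F_p)$-Galois extension. I do not expect a genuine obstacle here: the real difficulty is entirely absorbed into Theorem~\ref{thm:main} and its local counterpart, and what remains is bookkeeping — pinning down the conventions in Dwyer's correspondence so that the super-diagonal of $\rho$ is genuinely $(\chi_a,\chi_b,\chi_c)$ (any sign or ordering discrepancy is killed by a diagonal-preserving automorphism of $\U_4(\F_p)$), identifying the Frattini quotient of $\U_4(\F_p)$, and invoking the Burnside basis theorem correctly.
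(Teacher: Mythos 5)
Your proposal is correct and follows essentially the same route as the paper: the paper deduces the corollary from Theorems~\ref{thm:local} and \ref{thm:main} together with Corollary~\ref{cor:surj to U4}, whose proof is exactly your argument (Dwyer's correspondence to produce $\rho\colon G_K\to\U_4(\F_p)$ with prescribed super-diagonal, then surjectivity via the Frattini quotient since the kernel $A$ of $\U_4(\F_p)\to\F_p^3$ is the Frattini subgroup). Your added remarks on Kummer theory and on $K(\sqrt[p]{a},\sqrt[p]{b},\sqrt[p]{c})\subseteq L$ simply make explicit what the paper leaves implicit.
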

In fact for each $\U_4(\F_p)$-extension $L/K$, there exist $a,b,c\in K^\times\cap L^p$ such that  the classes $[a],[b],[c]$ in the $\F_p$-vector space $K^\times/(K^\times)^p$ are linearly independent, and that $\chi_a\cup\chi_b=\chi_b\cup\chi_c=0$ in $H^2(G_K,\F_p)$. Thus we see that this hypothesis is both necessary and sufficient for embedding abelian extensions of degree $p^3$ and exponent $p$ into a $\U_4(\F_p)$-extension. (See Section 4 for more detail.) 

In the case when $p=2$, Corollary~\ref{cor:U4} was also proved  in \cite[Section 4]{GLMS} for all fields $K$ of characteristic not 2. (See also \cite[Section 6]{MT}.)

Let us now recall briefly how Theorem~\ref{thm:HW} is established in \cite{HW}. 

Let $p=2$ and $K$ be a field of characteristic not $2$. In \cite{HW}, the authors construct for each $a,b,c\in K^\times$, a  $K$-variety $X_{a,b,c}$ which 
 has a $K$-rational point if and only if  the triple Massey product $\langle \chi_a,\chi_b,\chi_c\rangle$ is defined and contains 0 (see \cite[Theorem 1.1]{HW}).  The authors then establish a local version of Theorem~\ref{thm:HW} by using the non-degeneracy property of the cup products and the indeterminacy of Massey products.
Now assume that $K$ is a global field and consider $a,b,c\in K^\times$ such that $\langle \chi_a,\chi_b,\chi_c\rangle$ is defined. By applying a result of D. B. Leep and A. R. Wadsworth in \cite{LW}, the authors show that the splitting variety $X_{a.b,c}$ satisfies the Hasse local-global principle (see \cite[Theorem 3.4]{HW}), and then the result follows from the local case.

In our paper we also use the local-global principle but our method is different from the method used in the paper \cite{HW}.
Let $p$ be any prime, and let $K$ be a field containing a primitive $p$-th root of unity. Let $a,b,c\in K^\times$ such that the triple Massey product $\langle \chi_a,\chi_b,\chi_c\rangle$  is defined.
Now instead of constructing a splitting variety for $\langle \chi_a,\chi_b,\chi_c\rangle$, we use the technique of Galois embedding problems to  detect the vanishing property of triple Massey products.
Namely, $\langle \chi_a,\chi_b,\chi_c\rangle$ vanishes if certain kinds of embedding problems are solvable. This is true because of a result of W. G. Dwyer.  
We then use Hoechsmann's lemma to translate the problem of solvability of embedding problems to the problem of showing the vanishing of some degree 2 cohomology classes. Then we establish a local-global principle for the vanishing of the cohomology classes (see Lemma~\ref{lem:local-global}). Theorem~\ref{thm:main} then follows from its local version. This being said, our proof also provides another proof for Theorem~\ref{thm:HW} in the case $p=2$. 
\\
\\
{\bf Acknowledgments: } We would like to thank  Stefan Gille, Thong Nguyen Quang Do and Kirsten Wickelgren for their interest and correspondence. 
We are  grateful to the an anonymous referee for his/her very careful reading of our paper and for providing us with insightful comments and valuable suggestions which we used to improve our exposition considerably. For example, Proposition~\ref{prop:local-global} and Lemma~\ref{lem:1} were formulated based on his/her report.
\\
\\
{\it Addendum (October 2015)}: Since submitting of this paper there have been some new significant developments in this subject motivated and influenced by this paper and \cite{MT}. In \cite{EM1}  Efrat and Matzri proved a result which implies the main result  Theorem~\ref{thm:main} of this paper. In \cite{Ma} Matzri extended our main result Theorem~\ref{thm:main} to an arbitrary field $K$. Efrat and Matzri \cite{EM2} and in parallel \cite{MT3} gave a direct proofs of Matzri's result, using only tools from Galois cohomology.  In \cite{MT4} the explicit constructions of $\U_4(\F_p)$-Galois extensions over all fields which admit such extensions are provided. 
In \cite{MT5} the authors also considered  the vanishing property of higher Massey products over rigid fields.

\section{Review of Massey products}
In this section, we review some basic facts about Massey products,  see \cite{MT} and references therein for more detail.

Let $A$ be a unital commutative ring. Recall that a differential graded algebra (DGA) over $A$ is a graded associative $A$-algebra 
\[\sC^\bullet =\oplus _{k\geq 0} \sC^k=\sC^0\oplus\sC^1\oplus \sC^2\oplus \cdots \] with product $\cup$ and equipped with a differential $\partial\colon \sC^\bullet\to \sC^{\bullet +1}$ such that 
\begin{enumerate}
\item $\partial$ is a derivation, i.e.,
\[
\partial(a\cup b) = \partial a\cup b +(-1)^k a\cup \partial b \quad (a\in \sC^k);
\]
\item $\partial^2=0$.
\end{enumerate}
Then as usual the cohomology $H^\bullet$ of $\sC^\bullet$ is $\ker\partial/\im\partial$. We shall assume that $a_1,\ldots,a_n$ are elements in $H^1$.
\begin{defn}
 A collection $\sM=(a_{ij})$, $1\leq i<j\leq n+1$, $(i,j)\not=(1,n+1)$ of elements of $\sC^1$ is called a {\it defining system} for the {\it $n$-fold  Massey product} $\langle a_1,\ldots,a_n\rangle$ if the following conditions are fulfilled:
\begin{enumerate}
\item $a_{i,i+1}$ represents $a_i$.
\item $\partial a_{ij}= \sum_{l=i+1}^{j-1} a_{il}\cup a_{lj}$ for $i+1<j$.
\end{enumerate}
Then $\sum_{k=2}^{n} a_{1k}\cup a_{k,n+1}$ is a $2$-cocycle. (See for example \cite[page 233]{Fe}.)
Its  cohomology class in $H^2$  is called the {\it value} of the product relative to the defining system $M$,
and is denoted by $\langle a_1,\ldots,a_n\rangle_\sM$.

The product $\langle a_1,\ldots, a_n\rangle$ itself is the subset of $H^2$ consisting of all  elements which can be written in the form $\langle a_1,\ldots, a_n\rangle_\sM$ for some defining system $\sM$. 

The $n$-fold Massey product $\langle a_1,\ldots, a_n\rangle$ is said to be {\it defined} if it has a defining system, i.e., the set $\langle a_1,\ldots, a_n\rangle$ is non-empty.

For $n\geq2$ we say that $\sC^\bullet$ has the {\it vanishing $n$-fold Massey product property}
if every defined Massey product $\langle a_1,\ldots, a_n\rangle$, where $ a_1,\ldots, a_n\in \sC^1$,
necessarily contains $0$. When $n=3$ we will speak about {\it triple} Massey products and the vanishing {\it triple} Massey product property.
\end{defn}

Now let  $G$ be a profinite group and let $A$ be a finite commutative ring considered as a trivial discrete $G$-module. Let $\sC^\bullet=\sC^\bullet(G,A)$ be the DGA of inhomogeneous continuous cochains of $G$ with coefficients in $A$ \cite[Ch.\ I, \S2]{NSW}. We write $H^i(G,A)$ for the corresponding cohomology groups.

\begin{defn}
 We say that $G$ has the {\it vanishing $n$-fold Massey product property (with respect to $A$)} if the DGA $\sC^\bullet(G,A)$ has the vanishing $n$-fold Massey product property.
\end{defn}
\section{Unipotent matrices}

Let $\U_{n+1}(\F_p)$ be the group of all upper-triangular unipotent $(n+1)\times(n+1)$-matrices with entries in  $\F_p$. Let $Z_{n+1}(\F_p)$ be the subgroup of all such matrices with all off-diagonal entries being $0$ except possibly at position $(1,n+1)$. We may identify the quotient $\U_{n+1}(\F_p)/Z_{n+1}(\F_p)$ with the group $\bar\U_{n+1}(\F_p)$ of all upper-triangular unipotent $(n+1)\times(n+1)$-matrices with entries over $\F_p$ with the $(1,n+1)$-entry omitted.

For a representation $\rho\colon G\to \U_{n+1}(\F_p)$ and $1\leq i< j\leq n+1$, let $\rho_{ij}\colon G\to \F_p$ be the composition of $\rho$ with the projection from $\U_{n+1}(\F_p)$ to its $(i,j)$-coordinate. We use  similar notation for representations $\bar\rho\colon G\to\bar\U_{n+1}(\F_p)$. Note that $\rho_{i,i+1}$ (resp., $\bar\rho_{i,i+1}$) is a group homomorphism. 

Now we assume $n=3$. We consider the following exact sequence of finite groups
\[
1 \longrightarrow A \longrightarrow \U_4(\F_p) \xrightarrow{(a_{12},a_{23},a_{34})} \F_p^3\longrightarrow 1,
\]
here $a_{ij}\colon \U_4(\F_p)\to\F_p$ is the map sending a matrix to its $(i,j)$-coefficient. Explicitly,
\[
A=\left\{ \begin{bmatrix} 
1& 0 & a & b\\
0& 1 & 0 & c\\
0 & 0 & 1& 0\\
0& 0 & 0 & 1
\end{bmatrix} : a,b,c\in \F_p\right\}.
\] 

We consider the action of $\U_4(\F_p)$ on $A$ by conjugation: $g\cdot a= g a g^{-1}$, $\forall \, g\in \U_4(\F_p), a\in A$. Since $A$ is abelian, this action induces  an action of $\F_p^3$ on $A$, i.e., we get a homomorphism $\psi\colon \F_p^3\to {\rm Aut}(A)$.

Let $A^\prime={\rm Hom}(A,\F_p)$ be the dual $\F_p^3$-module of the $\F_p^3$-module $A$. Here the action of $\F_p^3$ on $A^\prime$ is given by
\[
(g\phi)(a)= \phi(g^{-1}\cdot a),
\] 
where $\phi\in {\rm Hom}(A,\F_p)$, $g\in \F_p^3$ and $a\in A$. (Here we write the group $\F_p^3$ multiplicatively.) 
From this action, we get a homomorphism $\psi^\prime \colon \F_p^3\to {\rm Aut}(A^\prime)$. 

The following lemma is a special case of a more general result on matrix representations of dual representations. For the convenience of the reader, we include a short proof.

\begin{lem}
\label{lem:dual rep}
Assume that $\{e_1,e_2,e_3\}$ is a basis for the $\F_p$-vector space $A$. 
Let $g$ be any element $\F_p^3$. Suppose that  $\psi(g)$ is given by matrix $X$ with respect to $e_1,e_2,e_3$. Then the matrix of $\psi^\prime(g)$ with respect to the dual basis is $(X^{-1})^T$.
\end{lem}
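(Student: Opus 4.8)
The plan is to verify this by a direct computation in coordinates, using the given basis of $A$ and the dual basis of $A'$. Write $\{e_1^\ast,e_2^\ast,e_3^\ast\}$ for the basis of $A'=\Hom(A,\F_p)$ dual to $\{e_1,e_2,e_3\}$, so that $e_i^\ast(e_j)=\delta_{ij}$. Let $X=(x_{ij})$ be the matrix of $\psi(g)$ with respect to $e_1,e_2,e_3$, meaning $\psi(g)(e_j)=\sum_i x_{ij}e_i$. Since $\psi$ is a group homomorphism and we write $\F_p^3$ multiplicatively, $\psi(g^{-1})=\psi(g)^{-1}$, so the matrix of $\psi(g^{-1})$ is $X^{-1}$; write $(X^{-1})_{lk}$ for its entries.

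Next I would unwind the definition of the dual ($\psi'$) action and evaluate on basis elements. For all indices $j,k$,
\[
\bigl(\psi'(g)(e_j^\ast)\bigr)(e_k)=e_j^\ast\bigl(g^{-1}\cdot e_k\bigr)=e_j^\ast\bigl(\psi(g^{-1})(e_k)\bigr)=e_j^\ast\Bigl(\sum_l (X^{-1})_{lk}e_l\Bigr)=(X^{-1})_{jk}.
\]
On the other hand, if $Y=(y_{ij})$ denotes the matrix of $\psi'(g)$ with respect to the dual basis, then $\psi'(g)(e_j^\ast)=\sum_i y_{ij}e_i^\ast$, and evaluating at $e_k$ gives $\bigl(\psi'(g)(e_j^\ast)\bigr)(e_k)=y_{kj}$. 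Comparing the two expressions yields $y_{kj}=(X^{-1})_{jk}$ for all $j,k$, i.e.\ $Y=(X^{-1})^T$, which is exactly the claim.

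There is no serious obstacle here; the lemma is a routine unravelling of definitions, and the only thing that needs care is the bookkeeping of conventions. The inverse appears because the contragredient action is defined via $g^{-1}$ (so as to remain a left action), and the transpose appears from the standard pairing identifying $\Hom(A,\F_p)$-coordinates with those of $A$. I would state the proof with the matrix convention $\psi(g)(e_j)=\sum_i x_{ij}e_i$ made explicit at the outset, so that the reader can immediately check the single displayed computation above.
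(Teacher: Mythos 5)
Your proof is correct and is essentially the same as the paper's: both unwind the definition of the contragredient action by evaluating $\psi'(g)$ on dual basis vectors and comparing coefficients, the only difference being that the paper directly names the entries of $X^{-1}$ rather than of $X$. No issues.
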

\begin{proof} We write $X^{-1}=(x_{ij})$. Let $\{e_1^\prime,e_2^\prime,e_3^\prime\}$ be the dual basis of the basis $\{e_1,e_2,e_3\}$. Then 
\[
(\psi^{\prime}(g)(e^\prime_i))(e_j)=e^\prime_i(\psi(g^{-1})(e_j))=e^\prime_i(\sum_k x_{kj} e_k)= x_{ij}=(\sum_k x_{ik} e_k^\prime)(e_j).
\]
Hence $\psi^{\prime}(g)(e^\prime_i)=\sum_k x_{ik} e_k^\prime$, and the lemma follows.
\end{proof}
\begin{lem}
\label{lem:explicit}
There exists an $\F_p$-basis of $A^\prime$ such that with respect to this basis the map $\psi^\prime  \colon \F_p^3\to {\rm Aut}(A^\prime)$ becomes a  map $ \F_p^3\to {\rm GL_3}(\F_p)$  which sends $(x,y,z)\in \F_p^3$ to 
$\begin{bmatrix}
1 & 0 &-x\\
0& 1 & z\\
 0& 0 &1
\end{bmatrix}.$
\end{lem}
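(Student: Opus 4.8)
The plan is to make the conjugation action $\psi$ of $\F_p^3$ on $A$ fully explicit in a convenient basis, transport it to $A^\prime$ via Lemma~\ref{lem:dual rep}, and then rename the dual basis so as to land on the asserted matrix. First I would work with the $\F_p$-basis $e_1,e_2,e_3$ of $A$ read off from the displayed description of $A$, i.e.\ $e_1,e_2,e_3$ are the classes of $I+E_{13}$, $I+E_{14}$, $I+E_{24}$, so that the element of $A$ with parameters $(a,b,c)$ is $I+aE_{13}+bE_{14}+cE_{24}$. Since $A$ is abelian and is normal in $\U_4(\F_p)$ — it is the kernel of $(a_{12},a_{23},a_{34})$ — the conjugation action of $\U_4(\F_p)$ on $A$ factors through $\F_p^3$ and may be evaluated on any lift: two lifts of a given class differ by an element of $A$, which commutes with everything in $A$. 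Taking the lift $g=I+xE_{12}+yE_{23}+zE_{34}$ of $(x,y,z)$, I would carry out the $4\times 4$ product $g\,(I+aE_{13}+bE_{14}+cE_{24})\,g^{-1}$; the outcome is $I+aE_{13}+(b+xc-za)E_{14}+cE_{24}$. In particular the middle coordinate $y$ acts trivially on $A$, and with respect to $e_1,e_2,e_3$ the automorphism $\psi(x,y,z)$ is the matrix $X=\left(\begin{smallmatrix}1&0&0\\-z&1&x\\0&0&1\end{smallmatrix}\right)$.

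Since $X$ is unipotent, $X^{-1}=\left(\begin{smallmatrix}1&0&0\\z&1&-x\\0&0&1\end{smallmatrix}\right)$ is immediate, so Lemma~\ref{lem:dual rep} gives that $\psi^\prime(x,y,z)$ in the dual basis $e_1^\prime,e_2^\prime,e_3^\prime$ is $(X^{-1})^T=\left(\begin{smallmatrix}1&z&0\\0&1&0\\0&-x&1\end{smallmatrix}\right)$; equivalently $\psi^\prime$ fixes $e_1^\prime$ and $e_3^\prime$ and sends $e_2^\prime\mapsto z\,e_1^\prime+e_2^\prime-x\,e_3^\prime$. Finally I would pass to the permuted basis $f_1=e_3^\prime$, $f_2=e_1^\prime$, $f_3=e_2^\prime$ of $A^\prime$; then $\psi^\prime(x,y,z)$ fixes $f_1,f_2$ and sends $f_3\mapsto -x\,f_1+z\,f_2+f_3$, so it is represented by $\left(\begin{smallmatrix}1&0&-x\\0&1&z\\0&0&1\end{smallmatrix}\right)$, which is exactly the claimed formula.

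The only genuine computation is the conjugation in $\U_4(\F_p)$, together with the bookkeeping of which coordinate of $\F_p^3$ occupies which super-diagonal entry; the disappearance of $y$ from the action on $A$ is a reassuring consistency check, since the target matrix does not involve $y$ either. Everything else — independence of the lift, inverting a unipotent $3\times 3$ matrix, transposing, and spotting the permutation $f_1=e_3^\prime,\ f_2=e_1^\prime,\ f_3=e_2^\prime$ of the dual basis — is routine.
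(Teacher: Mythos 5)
Your proposal is correct and follows essentially the same route as the paper: compute the conjugation action of a lift of $(x,y,z)$ on an explicit basis of $A$, pass to $A^\prime$ via Lemma~\ref{lem:dual rep}, and read off the matrix. The only difference is cosmetic --- the paper orders the basis of $A$ as $e_1=I+E_{24}$, $e_2=I+E_{13}$, $e_3=I+E_{14}$ from the outset so that the dual basis directly yields the stated matrix, whereas you take the natural ordering and permute the dual basis at the end; your intermediate computations (in particular $g\,m\,g^{-1}=I+aE_{13}+(b+xc-za)E_{14}+cE_{24}$ and the resulting $X$, $X^{-1}$, $(X^{-1})^{T}$) all check out.
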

\begin{proof}
We first describe the action of $\F_p^3$ on $A$, i.e., we describe the map $\psi\colon \F_p^3\to {\rm Aut}(A)$, as follows. 

Let $e_1=I+ E_{24}$, $e_2=I+E_{13}$,  $e_3=I+ E_{14}$. 
We have
\[
\psi(x,y,z)(e_1)= 
\begin{bmatrix}
1 & x & 0 &0 \\
0& 1 & y & 0\\
0& 0 & 1 & z \\
0& 0 & 0 & 1
\end{bmatrix}
\begin{bmatrix}
1 & 0 & 0 &0 \\
0& 1 & 0 & 1\\
0& 0 & 1 & 0 \\
0& 0 & 0 & 1
\end{bmatrix}
\begin{bmatrix}
1 & x & 0 &0 \\
0& 1 & y & 0\\
0& 0 & 1 & z \\
0& 0 & 0 & 1
\end{bmatrix}^{-1}
= \begin{bmatrix}
1 & 0 & 0 &x \\
0& 1 & 0 & 1\\
0& 0 & 1 & 0 \\
0& 0 & 0 & 1
\end{bmatrix}=e_1+xe_3;
\]
\[
\psi(x,y,z)(e_2)= 
\begin{bmatrix}
1 & x & 0 &0 \\
0& 1 & y & 0\\
0& 0 & 1 & z \\
0& 0 & 0 & 1
\end{bmatrix}
\begin{bmatrix}
1 & 0 & 1 &0 \\
0& 1 & 0 & 0\\
0& 0 & 1 & 0 \\
0& 0 & 0 & 1
\end{bmatrix}
\begin{bmatrix}
1 & x & 0 &0 \\
0& 1 & y & 0\\
0& 0 & 1 & z \\
0& 0 & 0 & 1
\end{bmatrix}^{-1}
= \begin{bmatrix}
1 & 0 & 1 &-z \\
0& 1 & 0 & 0\\
0& 0 & 1 & 0 \\
0& 0 & 0 & 1
\end{bmatrix}=e_2-ze_3;
\]
\[
\psi(x,y,z)(e_3)= 
\begin{bmatrix}
1 & x & 0 &0 \\
0& 1 & y & 0\\
0& 0 & 1 & z \\
0& 0 & 0 & 1
\end{bmatrix}
\begin{bmatrix}
1 & 0 & 0 &1 \\
0& 1 & 0 & 0\\
0& 0 & 1 & 0 \\
0& 0 & 0 & 1
\end{bmatrix}
\begin{bmatrix}
1 & x & 0 &0 \\
0& 1 & y & 0\\
0& 0 & 1 & z \\
0& 0 & 0 & 1
\end{bmatrix}^{-1}
= \begin{bmatrix}
1 & 0 & 0 &1 \\
0& 1 & 0 & 0\\
0& 0 & 1 & 0 \\
0& 0 & 0 & 1
\end{bmatrix}=e_3.
\]
Thus with respect to the $\F_p$-basis $\{e_1,e_2,e_3\}$ of $A$, the element $(x,y,z)\in \F_p^3$ is sent to the matrix 
$
\begin{bmatrix}
1 & 0 & 0\\
0& 1 & 0\\
x & -z &1
\end{bmatrix} \in {\rm GL}_3(\F_p).
$

Now we consider the $\F_p^3$-module $A^\prime$. By Lemma~\ref{lem:dual rep},  the structure map $\psi^\prime \colon \F_p^3\to {\rm Aut}(A^\prime)$ describing the action of $\F_p^3$ on $A^\prime$ with respect to the dual basis of $(e_1, e_2,e_3)$, is given by: 
\[ (x,y,z)
\mapsto 
\left( \begin{bmatrix}
1 & 0 &0\\
0& 1 & 0\\
 x& -z &1
\end{bmatrix}^{-1}\right)^T= \begin{bmatrix}
1 & 0 &x\\
0& 1 & -z\\
 0& 0 &1
\end{bmatrix}^{-1}=\begin{bmatrix}
1 & 0 &-x\\
0& 1 & z\\
 0& 0 &1
\end{bmatrix}.
\] 
\end{proof}

\section{Embedding problems}
A {\it weak embedding problem} $\sE$ for a profinite group $G$ is a diagram 
\[
\sE:=
\xymatrix
{
{} & G \ar[d]^{\alpha}\\
U \ar[r]^f & \bar U
}
\] 
which consists of  profinite groups $U$ and $\bar U$ and homomorphisms $\alpha \colon G\to \bar U$, $f\colon U\to \bar U$ with $f$ being surjective. (All homomorphisms of profinite groups considered in this paper are assumed to be continuous.) 

A {\it weak solution} of $\sE$ is  a homomorphism $\beta\colon G\to U$ such that $f\beta=\alpha$. 

 We call $\sE$ a {\it finite} weak embedding problem if  the group $U$ is finite. The {\it kernel} of $\sE$ is defined to be $M:=\ker(f)$. 

Let $\phi_1\colon G_1\to G$ be a homomorphism of profinite groups. Then $\phi_1$ induces the following weak embedding problem 
\[
\sE_1:=
\xymatrix
{
{} & G_1 \ar[d]^{\alpha\circ\phi_1}\\
U \ar[r]^{f} & \bar U.
}
\]
If  $\beta$ is a weak solution of $\sE$ then $\beta\circ \phi_1$ is a weak solution of $\sE_1$.

The following result is due to W. Dwyer. We will use this result to reformulate the vanishing Massey product property in terms of weak embedding problems. 
\begin{thm}[{\cite[Theorem 2.4]{Dwy}}]
\label{thm:Dwyer}
Let $\alpha_1,\ldots,\alpha_n$ be elements of $H^1(G,\F_p)$. There is a one-one correspondence $\sM\leftrightarrow \bar\rho_{\sM}$ between defining systems ${\sM}$ for $\langle \alpha_1,\ldots,\alpha_n\rangle$ and group homomorphisms $\bar\rho_{\sM} \colon G\to \bar{\U}_{n+1}(\F_p)$ with $(\bar\rho_{\sM})_{i,i+1}= -\alpha_i$, for $1\leq i\leq n$.

Moreover $\langle \alpha_1,\ldots,\alpha_n\rangle_{\sM}=0$ in $H^2(G,\F_p)$ if and only if the dotted homomorphism exists in the following  commutative diagram
\[
\xymatrix{
& & &G \ar@{->}[d]^{\bar\rho_{\sM}} \ar@{-->}[ld]\\
0\ar[r]&\F_p\ar[r] &\U_{n+1}(\F_p)\ar[r] &\bar{\U}_{n+1}(\F_p)\ar[r] &1.
}
\]
\end{thm}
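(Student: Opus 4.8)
The plan is to make Dwyer's correspondence completely explicit at the cochain level and then recognize the lifting obstruction for the central extension $0\to\F_p\to\U_{n+1}(\F_p)\to\bar\U_{n+1}(\F_p)\to 1$ as the Massey product value. First I would fix a defining system $\sM=(a_{ij})$ and define $\bar\rho_\sM\colon G\to\bar\U_{n+1}(\F_p)$ by declaring the $(i,j)$-entry of the matrix $\bar\rho_\sM(\sigma)$ to be $-a_{ij}(\sigma)$ for $(i,j)\neq(1,n+1)$. Using the matrix-multiplication formula in the unipotent group together with the inhomogeneous-cochain formulas $(\partial a)(\sigma,\tau)=a(\sigma)-a(\sigma\tau)+a(\tau)$ and $(a\cup b)(\sigma,\tau)=a(\sigma)b(\tau)$, the homomorphism identity $\bar\rho_\sM(\sigma\tau)=\bar\rho_\sM(\sigma)\bar\rho_\sM(\tau)$ unwinds entrywise, for each $(i,j)\neq(1,n+1)$, into
\[
\partial a_{ij}=\sum_{l=i+1}^{j-1}a_{il}\cup a_{lj}.
\]
For $j=i+1$ this says $a_{i,i+1}$ is a cocycle representing $\alpha_i$ (so $(\bar\rho_\sM)_{i,i+1}=-\alpha_i$), and for $i+1<j$ it is exactly condition (2) of a defining system. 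Reading the computation backwards shows that any homomorphism $\bar\rho$ with $(\bar\rho)_{i,i+1}=-\alpha_i$ arises this way from the cochains $a_{ij}:=-(\bar\rho)_{ij}$, which yields the asserted bijection $\sM\leftrightarrow\bar\rho_\sM$.

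For the second assertion I would invoke the standard obstruction theory for the central extension in the statement (the kernel $\F_p\cong Z_{n+1}(\F_p)$ is central in $\U_{n+1}(\F_p)$). Choose the set-theoretic section $s\colon\bar\U_{n+1}(\F_p)\to\U_{n+1}(\F_p)$ inserting $0$ in the $(1,n+1)$-entry, and set $\tilde\rho=s\circ\bar\rho_\sM$. Then $\bar\rho_\sM$ lifts to a genuine homomorphism $\rho\colon G\to\U_{n+1}(\F_p)$ with $f\rho=\bar\rho_\sM$ (the dotted arrow of the diagram) if and only if the $\F_p$-valued $2$-cocycle
\[
c(\sigma,\tau)=\tilde\rho(\sigma)\,\tilde\rho(\tau)\,\tilde\rho(\sigma\tau)^{-1}\in Z_{n+1}(\F_p)\cong\F_p
\]
is a coboundary, i.e. $[c]=0$ in $H^2(G,\F_p)$. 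The key computation is to read off its $(1,n+1)$-entry: since $\tilde\rho(\sigma)\tilde\rho(\tau)$ and $\tilde\rho(\sigma\tau)$ have the same image in $\bar\U_{n+1}(\F_p)$, they agree off the $(1,n+1)$-slot, so $c(\sigma,\tau)$ equals the difference of their $(1,n+1)$-entries. As $\tilde\rho(\sigma\tau)_{1,n+1}=0$ by the choice of $s$, the product rule (the terms $k=1$ and $k=n+1$ vanishing) gives
\[
c(\sigma,\tau)=\sum_{k=2}^{n}\tilde\rho(\sigma)_{1k}\,\tilde\rho(\tau)_{k,n+1}=\sum_{k=2}^{n}a_{1k}(\sigma)\,a_{k,n+1}(\tau),
\]
the two sign changes cancelling.

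This last expression is precisely the defining $2$-cocycle $\sum_{k=2}^{n}a_{1k}\cup a_{k,n+1}$ whose class is $\langle\alpha_1,\ldots,\alpha_n\rangle_\sM$. Hence $[c]=\langle\alpha_1,\ldots,\alpha_n\rangle_\sM$, and the central extension admits the dotted lift of $\bar\rho_\sM$ if and only if this class vanishes, which is the claim. I expect the only delicate points to be bookkeeping: fixing a single sign convention (the entries $-a_{ij}$) consistently so that the two minus signs in the cup products cancel, and correctly matching the lifting obstruction of the central extension with the $(1,n+1)$-entry of $c$ under the identification $Z_{n+1}(\F_p)\cong\F_p$. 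Everything else is a direct matrix computation.
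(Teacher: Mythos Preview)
Your argument is correct and is precisely the standard unwinding of Dwyer's correspondence: the entrywise computation showing that the homomorphism condition for $\bar\rho_{\sM}$ is equivalent to the defining-system relations, and the identification of the obstruction cocycle of the central extension with the $(1,n+1)$-entry, are both handled cleanly. The sign bookkeeping (taking $(\bar\rho_{\sM})_{ij}=-a_{ij}$ so that the two minus signs in each product term cancel) is exactly right and matches the convention the paper records immediately after the statement.

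Note, however, that the paper does not itself supply a proof of this theorem: it is quoted as \cite[Theorem~2.4]{Dwy} and only the explicit formula $(\bar\rho_{\sM})_{ij}=-a_{ij}$ for the correspondence is stated afterward, with a pointer to Dwyer's proof. So there is no ``paper's own proof'' to compare against beyond that formula; what you have written is a complete and self-contained verification of the cited result, fully consistent with the description the paper gives.
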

Explicitly, the one-one correspondence in Theorem~\ref{thm:Dwyer} is given by: For a defining system $\sM=(a_{ij})$ for $\langle \alpha_1,\ldots,\alpha_n\rangle$, $\bar\rho_{\sM}\colon G\to \bar{\U}_{n+1}(\F_p)$ is defined by letting $(\bar\rho_{\sM})_{ij}=-a_{ij}$ (see \cite[Proof of Theorem 2.4]{Dwy}).

\begin{lem}
\label{lem:vanishingEP}
Let $G$ be a profinite group, and $n\geq 3$ an integer. Then the following statements are equivalent:
\begin{enumerate}
\item $G$ has the vanishing $n$-fold Massey product property with respect to $\F_p$.
\item For every homomorphism $\bar\rho\colon G\to \bar\U_{n+1}(\F_p)$, the finite weak embedding problem 
\[
\xymatrix{
& & &G \ar@{->}[d]^{(\bar\rho_{12},\ldots,\bar\rho_{n,n+1})} \ar@{-->}[ld]\\
0\ar[r]& A \ar[r] &\U_{n+1}(\F_p)\ar[r] &\F_p^n\ar[r] &1,
}
\]
has a weak  solution, i.e., $(\bar\rho_{12},\bar\rho_{23},\ldots,\bar\rho_{n,n+1})$ can be lifted to a homomorphism $\rho\colon G\to \U_{n+1}(\F_p)$.
\end{enumerate}
\end{lem}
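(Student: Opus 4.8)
\emph{Proof proposal.} The plan is to deduce both implications directly from Dwyer's theorem (Theorem~\ref{thm:Dwyer}), exploiting the factorization of surjections
\[
\U_{n+1}(\F_p)\twoheadrightarrow \bar\U_{n+1}(\F_p)\twoheadrightarrow \F_p^n ,
\]
in which the first map is the quotient by the central subgroup $Z_{n+1}(\F_p)\cong\F_p$ (the $(1,n+1)$-entry), the second is the projection onto the super-diagonal entries, and the composite is the surjection $\U_{n+1}(\F_p)\to\F_p^n$ with kernel $A$ appearing in statement (2). The key observation is that a defining system $\sM$ for $\langle\alpha_1,\ldots,\alpha_n\rangle$ encodes precisely a lift of the super-diagonal homomorphism $(-\alpha_1,\ldots,-\alpha_n)\colon G\to\F_p^n$ to $\bar\U_{n+1}(\F_p)$, namely $\bar\rho_\sM$, whereas a further lift of $\bar\rho_\sM$ to $\U_{n+1}(\F_p)$ exists if and only if $\langle\alpha_1,\ldots,\alpha_n\rangle_\sM=0$. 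Since the coefficient module $\F_p$ is trivial we have $H^1(G,\F_p)=\Hom(G,\F_p)$, and by the explicit description in Theorem~\ref{thm:Dwyer} every defining system $\sM$ for a given tuple yields a homomorphism $\bar\rho_\sM\colon G\to\bar\U_{n+1}(\F_p)$ with the \emph{same} super-diagonal $(\bar\rho_\sM)_{i,i+1}=-\alpha_i$.

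For (1)$\Rightarrow$(2), I would start from an arbitrary homomorphism $\bar\rho\colon G\to\bar\U_{n+1}(\F_p)$ and set $\alpha_i:=-\bar\rho_{i,i+1}\in H^1(G,\F_p)$. By Theorem~\ref{thm:Dwyer}, $\bar\rho=\bar\rho_\sM$ for some defining system $\sM$ of $\langle\alpha_1,\ldots,\alpha_n\rangle$; in particular this Massey product is defined. By hypothesis (1) it contains $0$, so there is a defining system $\sM'$ with $\langle\alpha_1,\ldots,\alpha_n\rangle_{\sM'}=0$, and Theorem~\ref{thm:Dwyer} then furnishes a homomorphism $\rho\colon G\to\U_{n+1}(\F_p)$ lifting $\bar\rho_{\sM'}$. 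Reading off the super-diagonal entries gives $\rho_{i,i+1}=(\bar\rho_{\sM'})_{i,i+1}=-\alpha_i=\bar\rho_{i,i+1}$ for all $i$, so $\rho$ is a weak solution of the embedding problem in (2).

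For (2)$\Rightarrow$(1), I would take a tuple $\alpha_1,\ldots,\alpha_n\in H^1(G,\F_p)$ for which $\langle\alpha_1,\ldots,\alpha_n\rangle$ is defined, choose a defining system $\sM$, and apply (2) to the homomorphism $\bar\rho:=\bar\rho_\sM\colon G\to\bar\U_{n+1}(\F_p)$, whose super-diagonal is $(-\alpha_1,\ldots,-\alpha_n)$. This produces $\rho\colon G\to\U_{n+1}(\F_p)$ with $\rho_{i,i+1}=-\alpha_i$ for all $i$. Composing $\rho$ with $\U_{n+1}(\F_p)\twoheadrightarrow\bar\U_{n+1}(\F_p)$ gives a homomorphism $\bar\rho'\colon G\to\bar\U_{n+1}(\F_p)$ with $(\bar\rho')_{i,i+1}=-\alpha_i$, so by Theorem~\ref{thm:Dwyer} $\bar\rho'=\bar\rho_{\sM'}$ for some defining system $\sM'$ of $\langle\alpha_1,\ldots,\alpha_n\rangle$; since $\bar\rho'$ visibly lifts to $\U_{n+1}(\F_p)$ via $\rho$, the same theorem yields $\langle\alpha_1,\ldots,\alpha_n\rangle_{\sM'}=0$, whence $0\in\langle\alpha_1,\ldots,\alpha_n\rangle$.

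The argument is essentially bookkeeping, so I do not expect a serious obstacle; the one point requiring care is to keep straight the three groups and the two successive embedding problems --- the \emph{inner} one from Dwyer's theorem (kernel $\F_p=Z_{n+1}(\F_p)$, total space $\bar\U_{n+1}(\F_p)$) versus the \emph{outer} one in statement (2) (kernel $A$, total space $\F_p^n$) --- together with the sign convention $\bar\rho_{i,i+1}=-\alpha_i$. Once one notes that the datum $\bar\rho_\sM$ already accomplishes the lift from $\F_p^n$ to $\bar\U_{n+1}(\F_p)$, so that only the remaining lift to $\U_{n+1}(\F_p)$ is at issue, both implications fall out of Theorem~\ref{thm:Dwyer} with no computation.
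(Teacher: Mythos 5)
Your argument is correct and is precisely the intended one: the paper's own proof consists of the single line ``This follows from Theorem~\ref{thm:Dwyer},'' and your write-up is a faithful elaboration of exactly that deduction, including the key point that the embedding problem in (2) only constrains the super-diagonal, so lifting $\bar\rho_{\sM'}$ rather than $\bar\rho$ itself suffices. No gaps.
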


\begin{proof} This follows from Theorem~\ref{thm:Dwyer}.
\end{proof}

\begin{cor}
\label{cor:surj to U4} Let $G$ be a profinite group.  Let $\chi_1,\chi_2,\chi_3\in H^1(G,\F_p)$ be $\F_p$-linearly independent. Assume that $G$ has the vanishing triple Massey product and that $\chi_1\cup\chi_2=\chi_2\cup\chi_3=0\in H^2(G,\F_p)$. Then there is a continuous surjective homomorphism $\rho\colon G\to \U_4(\F_p)$ such that
$\rho_{12}=\chi_1$, $\rho_{23}=\chi_2$ and $\rho_{34}=\chi_3$.
\end{cor}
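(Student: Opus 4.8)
The plan is to first produce \emph{some} continuous homomorphism $\rho\colon G\to\U_4(\F_p)$ with $\rho_{12}=\chi_1$, $\rho_{23}=\chi_2$, $\rho_{34}=\chi_3$ by invoking Dwyer's theorem, and then to upgrade $\rho$ to a surjection via a Frattini-subgroup argument. For the first step: because $\chi_1\cup\chi_2=\chi_2\cup\chi_3=0$ in $H^2(G,\F_p)$, the triple Massey product $\langle\chi_1,\chi_2,\chi_3\rangle$ is defined (picking $1$-cochains $a_{12},a_{23},a_{34}$ representing $\chi_1,\chi_2,\chi_3$, the cocycles $a_{12}\cup a_{23}$ and $a_{23}\cup a_{34}$ are coboundaries, so the cochains $a_{13},a_{24}$ required in a defining system exist), and the same holds for $\langle-\chi_1,-\chi_2,-\chi_3\rangle$ since these cup products are unchanged by the sign. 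As $G$ has the vanishing triple Massey product property, some defining system $\sM$ satisfies $\langle-\chi_1,-\chi_2,-\chi_3\rangle_\sM=0$. By Theorem~\ref{thm:Dwyer} (equivalently Lemma~\ref{lem:vanishingEP}), $\sM$ corresponds to a homomorphism $\bar\rho_\sM\colon G\to\bar\U_4(\F_p)$ with $(\bar\rho_\sM)_{i,i+1}=\chi_i$, and the vanishing of its value means $\bar\rho_\sM$ lifts along $\U_4(\F_p)\to\bar\U_4(\F_p)$ to a continuous homomorphism $\rho\colon G\to\U_4(\F_p)$; since lifting does not alter the superdiagonal entries, $\rho_{i,i+1}=\chi_i$ for $i=1,2,3$. (Alternatively, apply Theorem~\ref{thm:Dwyer} directly to $\langle\chi_1,\chi_2,\chi_3\rangle$ to get a lift with superdiagonal $-\chi_i$, then replace it by its conjugate under $\mathrm{diag}(1,-1,1,-1)\in{\rm GL}_4(\F_p)$, which negates each superdiagonal entry.)

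For the second step I would use that $\U_4(\F_p)$ is a finite $p$-group with Frattini subgroup $\Phi(\U_4(\F_p))=A$. Indeed $[\U_4(\F_p),\U_4(\F_p)]=A$ (commutators of the standard superdiagonal generators already generate $A$, while $\U_4(\F_p)/A\cong\F_p^3$ is abelian), and every $p$-th power lies in $A$ because $\U_4(\F_p)/A$ has exponent $p$; hence $\Phi(\U_4(\F_p))=[\U_4(\F_p),\U_4(\F_p)]\,\U_4(\F_p)^{p}=A$, and the Frattini quotient map is identified with $(a_{12},a_{23},a_{34})\colon\U_4(\F_p)\to\F_p^3$. Composing $\rho$ with this map yields $(\chi_1,\chi_2,\chi_3)\colon G\to\F_p^3$, which is surjective: otherwise its image would lie in the kernel of some nonzero functional $(v_1,v_2,v_3)\mapsto c_1v_1+c_2v_2+c_3v_3$, forcing $c_1\chi_1+c_2\chi_2+c_3\chi_3=0$ in $H^1(G,\F_p)=\Hom(G,\F_p)$, contrary to the assumed $\F_p$-linear independence. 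Thus $\rho(G)$ surjects onto $\U_4(\F_p)/\Phi(\U_4(\F_p))$, and since in a finite $p$-group a subgroup whose image generates the Frattini quotient is the whole group, $\rho(G)=\U_4(\F_p)$; this $\rho$ is the required continuous surjection.

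The genuine input is entirely supplied by Theorem~\ref{thm:Dwyer} together with the standing hypothesis that $G$ has the vanishing triple Massey product property, so I do not anticipate a serious obstacle. The only points needing care are the sign bookkeeping in Dwyer's correspondence (hence the use of $-\chi_i$, or the conjugation by $\mathrm{diag}(1,-1,1,-1)$) and the routine identification $\Phi(\U_4(\F_p))=A$, which is exactly what makes the $\F_p$-linear independence of $\chi_1,\chi_2,\chi_3$ the precise condition yielding surjectivity.
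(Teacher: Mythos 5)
Your proposal is correct and follows essentially the same route as the paper: deduce that the triple Massey product is defined from the vanishing cup products, apply Dwyer's correspondence and the vanishing triple Massey product property (Lemma~\ref{lem:vanishingEP}) to obtain a lift $\rho\colon G\to\U_4(\F_p)$ with the prescribed superdiagonal, and conclude surjectivity from $\Phi(\U_4(\F_p))=A$ together with the linear independence of $\chi_1,\chi_2,\chi_3$. Your extra care with the sign in Dwyer's theorem (working with $-\chi_i$ or conjugating by $\mathrm{diag}(1,-1,1,-1)$) and your spelled-out Frattini argument only make explicit what the paper leaves implicit.
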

\begin{proof}
Since $\chi_1\cup\chi_2=\chi_2\cup\chi_3=0\in H^2(G,\F_p)$, there exist $a_{12}, a_{23}\in \sC^1(G,\F_p)$ such that $\partial a_{12}=\chi_1\cup \chi_2$ and $\partial a_{23}=\chi_2\cup\chi_3$. This implies that the triple Massey product $\langle \chi_1,\chi_2,\chi_3\rangle$ is defined. By Theorem~\ref{thm:Dwyer}, we have a homomorphism $\bar\rho\colon G\to \bar\U_4(\F_p)$ such that $\bar{\rho}_{12}=\chi_1$, $\bar{\rho}_{23}=\chi_2$ and $\bar{\rho}_{34}=\chi_3$.
By Lemma~\ref{lem:vanishingEP}, there exists a homomorphism $\rho\colon G\to \U_4(\F_p)$ such that
\[
\rho_{12}=\bar{\rho}_{12}=\chi_1,\;\; \rho_{23}=\bar{\rho}_{23}=\chi_2,\;\; \rho_{34}=\bar{\rho}_{34}=\chi_3.
\]
Note that the Frattini subgroup of $\U_4(\F_p)$ is $A$. Hence by the Frattini argument $\rho \colon G\to \U_4(\F_p)$ is surjective. 
\end{proof}
\begin{rmk}
Let $\rho\colon G\to \U_4(\F_p)$ be a surjective homomorphism. Let $\chi_1=\rho_{12}$, $\chi_2=\rho_{23}$ and $\chi_3=\rho_{34}$. Since $(\rho_{12},\rho_{23},\rho_{34})\colon G\to \F_p\times\F_p\times \F_p$ is surjective, we see that $\chi_1,\chi_2$ and $\chi_3$ are $\F_p$-linearly independent. Furthermore since $\rho$ is group homomorphism, we see that $\chi_1\cup\chi_2=\chi_2\cup\chi_3=0\in H^2(G,\F_p)$.
\end{rmk}
\begin{lem}[Hoechsmann]
\label{lem:Hoechsmann} Let $\sE$ be a finite weak embedding problem for $G$ with finite abelian kernel $M$. 
Let $\epsilon\in H^2(\bar U,M)$ be the cohomology class corresponding to the embedding problem  $\sE$. Then $\sE$ has a weak solution if and only if $\alpha^*(\epsilon)=0\in H^2(G,M)$.
\end{lem}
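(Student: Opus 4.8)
The plan is to realize a weak solution of $\sE$ as a group-theoretic splitting of a pulled-back extension, and then to invoke the classical dictionary between second cohomology and extension classes. Recall that $\epsilon\in H^2(\bar U,M)$ is by definition the class of the extension $1\to M\to U\xrightarrow{f}\bar U\to1$, where $\bar U$ (and hence $G$, via $\alpha$) acts on the abelian group $M$ by conjugation inside $U$.

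First I would form the fibre product $\tilde U:=U\times_{\bar U}G=\{(u,g)\in U\times G:f(u)=\alpha(g)\}$, a closed subgroup of $U\times G$ and hence again a profinite group. Since $f$ is surjective, the projection $\pi\colon\tilde U\to G$ is surjective with kernel $\{(m,1):m\in M\}\cong M$, so we obtain an extension $1\to M\to\tilde U\xrightarrow{\pi}G\to1$; by functoriality of extension classes under pullback, its class in $H^2(G,M)$ is precisely $\alpha^*(\epsilon)$. Next I would observe that weak solutions of $\sE$ correspond bijectively to continuous sections of $\pi$: if $\beta\colon G\to U$ satisfies $f\beta=\alpha$ then $g\mapsto(\beta(g),g)$ is a homomorphism $G\to\tilde U$ splitting $\pi$, and conversely composing a section $s\colon G\to\tilde U$ with the projection $\tilde U\to U$ gives a homomorphism $\beta\colon G\to U$ with $f\beta=\alpha$. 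Finally, by the standard correspondence between $H^2(G,M)$ and equivalence classes of extensions of $G$ by the $G$-module $M$ --- valid in the profinite/continuous setting since $M$ is finite --- the extension $1\to M\to\tilde U\to G\to1$ admits a section if and only if its class vanishes. Chaining the two equivalences proves the lemma.

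The only point requiring care is the identification of the pullback class: one must verify that forming $\tilde U$ genuinely computes $\alpha^*(\epsilon)$ at the level of cocycles (choose a set-theoretic section of $f$, read off a $2$-cocycle representing $\epsilon$, and check that composing with $\alpha$ yields a $2$-cocycle representing the class of $1\to M\to\tilde U\to G\to1$). The remaining bookkeeping --- closedness of $\tilde U$ in $U\times G$, continuity of the section $g\mapsto(\beta(g),g)$, and the validity of the $H^2$--extensions correspondence for a finite kernel --- is routine.
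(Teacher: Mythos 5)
Your argument is correct and is exactly the standard proof of Hoechsmann's lemma: the paper itself gives no argument but simply cites \cite[Statement 1.1]{Ho} and \cite[Proposition 3.5.9]{NSW}, and the proof found there is the same fibre-product construction you describe (weak solutions of $\sE$ correspond to continuous splittings of $1\to M\to U\times_{\bar U}G\to G\to 1$, whose class is $\alpha^*(\epsilon)$, and an extension with finite abelian kernel splits iff its class in $H^2$ vanishes). Since $U$, $\bar U$ and $M$ are finite here, the continuity bookkeeping you flag is indeed routine.
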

\begin{proof} 
See \cite[Statement 1.1, page 82]{Ho}. (See also  \cite[Chapter 3, \S 5, Proposition 3.5.9]{NSW}.)
\end{proof}
\begin{cor}
\label{cor:local-global} Let $\sE(G)=(\alpha\colon G\to \bar{U}, f\colon U\to \bar U)$ be a finite weak embedding problem for $G$ with abelian kernel $M$. Let $\phi_i\colon G_i\to G, i\in I,$ be a family of homomorphisms of profinite groups. Assume that the natural homomorphism
\[ H^2(G,M) \to \prod_i H^2(G_i,M),
\]
is injective. Then the weak embedding problem $\sE(G)$ has a weak solution if and only if for every $i\in I$ the induced weak embedding problem $\sE(G_i)$ has a weak solution.
\end{cor}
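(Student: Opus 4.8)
The plan is to deduce this directly from Hoechsmann's Lemma~\ref{lem:Hoechsmann}, using the functoriality of the cohomology class attached to an embedding problem. Write $M=\ker(f)$ and let $\epsilon\in H^2(\bar U,M)$ be the class corresponding to $\sE(G)$. The key observation is that $\epsilon$ depends only on the extension $1\to M\to U\xrightarrow{f}\bar U\to 1$ together with the induced $\bar U$-action on $M$, and not on the vertical map $\alpha$. Consequently, for each $i\in I$ the induced weak embedding problem $\sE(G_i)$ --- which has the same $f$ but with $\alpha$ replaced by $\alpha\circ\phi_i\colon G_i\to\bar U$ --- has the same associated class $\epsilon\in H^2(\bar U,M)$. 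By Lemma~\ref{lem:Hoechsmann}, $\sE(G_i)$ has a weak solution if and only if $(\alpha\circ\phi_i)^*(\epsilon)=0$ in $H^2(G_i,M)$, and by functoriality of pullback this is the condition $\phi_i^*\bigl(\alpha^*(\epsilon)\bigr)=0$.

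Granting this, the two implications are immediate. If $\sE(G)$ has a weak solution $\beta\colon G\to U$, then $\beta\circ\phi_i\colon G_i\to U$ is a weak solution of $\sE(G_i)$, as already observed earlier in this section (equivalently, $\alpha^*(\epsilon)=0$ forces $\phi_i^*(\alpha^*(\epsilon))=0$ for all $i$); this direction does not use the injectivity hypothesis. Conversely, assume every $\sE(G_i)$ has a weak solution. Then $\phi_i^*(\alpha^*(\epsilon))=0$ for all $i\in I$, i.e.\ the image of $\alpha^*(\epsilon)\in H^2(G,M)$ under the natural map $H^2(G,M)\to\prod_i H^2(G_i,M)$ vanishes. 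By the injectivity hypothesis $\alpha^*(\epsilon)=0$ in $H^2(G,M)$, and Lemma~\ref{lem:Hoechsmann} then yields a weak solution of $\sE(G)$.

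I expect the only step requiring genuine care to be the first one: checking that passing from $\sE(G)$ to the induced problem $\sE(G_i)$ corresponds, at the level of $H^2$, to pulling back the obstruction class along $\phi_i$. Once one unwinds that the obstruction to a weak solution is always $\alpha^*$ (resp.\ $(\alpha\circ\phi_i)^*$) applied to the single class $\epsilon$ determined by the bottom row $1\to M\to U\to\bar U\to 1$, everything else is a formal manipulation, and the corollary follows with no further input.
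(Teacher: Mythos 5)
Your argument is correct and is essentially the paper's own proof: both reduce the statement to Hoechsmann's Lemma via the sequence $H^2(\bar U,M)\xrightarrow{\alpha^*}H^2(G,M)\hookrightarrow\prod_i H^2(G_i,M)$, using that the obstruction class of the induced problem $\sE(G_i)$ is $(\alpha\circ\phi_i)^*(\epsilon)=\phi_i^*(\alpha^*(\epsilon))$. Your write-up simply makes explicit the functoriality step that the paper leaves implicit.
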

\begin{proof} 
We consider the following sequence
\[
\xymatrix{
H^2(\bar U,M) \ar@{->}[r]^{\alpha^*}&  H^2(G,M) \;\ar@{>->}[r] 
& \prod_{i\in I}  H^2(G_i, M).
}
\]
The statement follows from Lemma~\ref{lem:Hoechsmann}.
\end{proof}

\begin{prop}
\label{prop:local-global}
Suppose that $G_i$, $i\in I$, are closed subgroups of a profinite group $G$, and that for every map $\alpha\colon G\to \F_p^3$ the map 
\[
{\rm Res}\colon H^2(G,A) \longrightarrow  \prod_{i\in I}H^2(G_i,A)
\]
is injective,  where the action is via $ \psi\circ \alpha \colon G\to {\rm Aut}(A)$. If each $G_i$ has the triple vanishing Massey product property, then $G$ also has the triple vanishing Massey product property. 
\end{prop}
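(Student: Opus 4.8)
The plan is to translate the statement into the language of weak embedding problems via Lemma~\ref{lem:vanishingEP}, and then to apply the local-global principle for such problems recorded in Corollary~\ref{cor:local-global}. Concretely, I would fix an arbitrary continuous homomorphism $\bar\rho\colon G\to\bar\U_4(\F_p)$ and put $\alpha:=(\bar\rho_{12},\bar\rho_{23},\bar\rho_{34})\colon G\to\F_p^3$, which is a homomorphism because the three chosen matrix coordinates make up the quotient map $\bar\U_4(\F_p)\to\F_p^3$. By Lemma~\ref{lem:vanishingEP} applied to $G$, it then suffices to prove that, for every such $\bar\rho$, the finite weak embedding problem
\[
\sE(G):\qquad 0\longrightarrow A\longrightarrow \U_4(\F_p)\xrightarrow{f}\F_p^3\longrightarrow 1,
\]
with structural map $\alpha\colon G\to\F_p^3$, admits a weak solution.

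Next I would pin down the $G$-module structure on the kernel $M=\ker f=A$ of $\sE(G)$. Since $A$ is abelian, the conjugation action of $\U_4(\F_p)$ on $A$ factors through $\U_4(\F_p)/A=\F_p^3$ and yields exactly the homomorphism $\psi\colon\F_p^3\to{\rm Aut}(A)$ of Section~3, so the $G$-action on $A$ coming from $\sE(G)$ is $\psi\circ\alpha$. This is precisely the action appearing in the hypothesis of the proposition, hence by assumption the map ${\rm Res}\colon H^2(G,A)\to\prod_{i\in I}H^2(G_i,A)$ is injective for this $G$-module $A$.

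Then I would restrict along the inclusions $\phi_i\colon G_i\hookrightarrow G$. The weak embedding problem $\sE(G_i)$ induced from $\sE(G)$ by $\phi_i$ is the one that Lemma~\ref{lem:vanishingEP} attaches to the restricted homomorphism $\bar\rho|_{G_i}\colon G_i\to\bar\U_4(\F_p)$ (its structural map being $\alpha\circ\phi_i=(\bar\rho_{12}|_{G_i},\bar\rho_{23}|_{G_i},\bar\rho_{34}|_{G_i})$). Since $G_i$ has the vanishing triple Massey product property, Lemma~\ref{lem:vanishingEP} applied to $G_i$ shows that $\sE(G_i)$ has a weak solution for every $i\in I$. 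Now Corollary~\ref{cor:local-global}, applied to the family $\{\phi_i\colon G_i\to G\}_{i\in I}$ and the module $M=A$ with action $\psi\circ\alpha$ — whose hypothesis is exactly the injectivity of ${\rm Res}$ just noted — yields a weak solution of $\sE(G)$. As $\bar\rho$ was arbitrary, Lemma~\ref{lem:vanishingEP} then gives the vanishing triple Massey product property for $G$.

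The steps that require care are purely bookkeeping ones: checking that the $G$-action on $A$ dictated by the extension $0\to A\to\U_4(\F_p)\to\F_p^3\to 1$ really is $\psi\circ\alpha$ (so that the hypothesis of the proposition is the one we need), and checking that the operation ``induce $\sE$ along $\phi_i\colon G_i\hookrightarrow G$'' is compatible, via Theorem~\ref{thm:Dwyer} and Lemma~\ref{lem:vanishingEP}, with the operation ``restrict $\bar\rho$ to $G_i$''. I do not expect a genuine obstacle in the argument itself: all of the arithmetic content has been isolated into the injectivity hypothesis on ${\rm Res}$, which is taken here as given and is verified separately (for global fields, with the $G_i$ the decomposition groups) elsewhere in the paper.
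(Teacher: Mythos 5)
Your proposal is correct and follows essentially the same route as the paper: the paper's proof likewise fixes an arbitrary $\bar\rho\colon G\to\bar\U_4(\F_p)$, reduces via condition (2) of Lemma~\ref{lem:vanishingEP} to solving the weak embedding problem $\sE(G)$ with kernel $A$, observes that each induced problem $\sE(G_i)$ is solvable by the hypothesis on the $G_i$, and concludes with Corollary~\ref{cor:local-global}. Your additional bookkeeping (identifying the $G$-action on $A$ as $\psi\circ\alpha$ and matching restriction of $\bar\rho$ with induction of the embedding problem) is left implicit in the paper but is exactly the right verification.
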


\begin{proof}
We shall prove the condition (2) in Lemma \ref{lem:vanishingEP}. 

Let $\bar\rho\colon G\to \bar\U_4(\F_p)$ be any homomorphism. We   consider the weak embedding problem
\[
(\sE) \xymatrix{
& & &G \ar@{->}[d]^{(\bar\rho_{12}, \bar\rho_{23}, \bar\rho_{34})} \\
0\ar[r]& A \ar[r] &\U_4(\F_p)\ar[r] &(\F_p)^3\ar[r] &1.
}
\]

By assumption  for every $i\in I$ the induced weak embedding problem $(\sE_i)$ 
\[
(\sE_i) \xymatrix{
& & &G_{i} \ar@{->}[d]^{(\bar\rho_{12}, \bar\rho_{23}, \bar\rho_{34})} \ar@{-->}[ld]\\
0\ar[r]& A \ar[r] &\U_4(\F_p)\ar[r] &(\F_p)^3\ar[r] &1,
}
\]
 has a weak solution.  By Corollary~\ref{cor:local-global}, $(\sE)$ has a weak solution also. 
\end{proof}

\section{The vanishing of a certain cohomology group}
Let $G$ be a profinite group, and let $M$ be a discrete $G$-module. We define
\[
H^1_*(G,M)=\ker ( H^1(G,M)\to \prod_{C} H^1(C,M)),
\]
where the product is over all closed cyclic subgroups (in the profinite sense) of $G$. 

(The definition of $H^1_*(G,M)$ is due to Tate (see \cite[\S 2]{Se}). This definition also appeared in \cite[\S 2]{DZ}, in which the authors used the notation $H^1_{\rm loc}$ instead of using $H^1_*$.)

The following lemma is a special case of  \cite[Lemma 3.3]{DZ}. It is a simple lemma and therefore we also omit a proof.
\begin{lem}
\label{lem:linear algebra}
 Let $V$ be a vector space of finite dimension over a field $k$. Let $\varphi_1,\varphi_2$ be elements in the dual $k$-vector space $V^*:={\rm Hom}(V,k)$. If $\ker \varphi_1\subseteq \ker\varphi_2$ then there exists $\lambda\in k$ such that $\varphi_2=\lambda \varphi_1$.
\end{lem}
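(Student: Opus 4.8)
\textbf{Proof proposal for Lemma~\ref{lem:linear algebra}.}

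The plan is to argue directly from linear algebra, splitting into two cases according to whether $\varphi_1$ is the zero functional. If $\varphi_1 = 0$, then $\ker\varphi_1 = V$, so the hypothesis $\ker\varphi_1 \subseteq \ker\varphi_2$ forces $\ker\varphi_2 = V$, hence $\varphi_2 = 0$ as well, and we may take $\lambda = 0$. So assume $\varphi_1 \neq 0$; then $\ker\varphi_1$ is a hyperplane in $V$, i.e. $\dim \ker\varphi_1 = \dim V - 1$. Pick any vector $v_0 \in V \setminus \ker\varphi_1$, so that $\varphi_1(v_0) \neq 0$, and set $\lambda := \varphi_2(v_0)/\varphi_1(v_0) \in k$.

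Next I would verify $\varphi_2 = \lambda \varphi_1$ by evaluating both sides on a spanning set. Since $\varphi_1 \neq 0$, we have the direct sum decomposition $V = \ker\varphi_1 \oplus k v_0$: indeed every $v \in V$ can be written as $v = \bigl(v - \tfrac{\varphi_1(v)}{\varphi_1(v_0)} v_0\bigr) + \tfrac{\varphi_1(v)}{\varphi_1(v_0)} v_0$, where the first summand lies in $\ker\varphi_1$. On $\ker\varphi_1$ both $\varphi_2$ (by the hypothesis $\ker\varphi_1 \subseteq \ker\varphi_2$) and $\lambda\varphi_1$ vanish, so they agree there. On $v_0$ we have $(\lambda\varphi_1)(v_0) = \lambda \varphi_1(v_0) = \varphi_2(v_0)$ by the choice of $\lambda$. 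Since the two linear functionals $\varphi_2$ and $\lambda\varphi_1$ agree on $\ker\varphi_1$ and on $v_0$, and these together span $V$, they are equal.

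There is no serious obstacle here; the only point requiring the finite-dimensionality (or at least the existence of a complement) is the decomposition $V = \ker\varphi_1 \oplus k v_0$, which is automatic once $\varphi_1 \neq 0$ since $\ker\varphi_1$ has codimension one. This is precisely why the statement is labelled simple and its proof omitted in \cite{DZ}; I include the short argument above only for completeness.
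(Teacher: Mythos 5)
Your argument is correct and complete. The paper itself gives no proof of this lemma --- it simply remarks that it is a special case of Lemma~3.3 of \cite{DZ} and omits the argument as simple --- so there is no internal proof to compare against; your write-up supplies exactly the standard argument one would expect. Both cases are handled properly, the decomposition $V=\ker\varphi_1\oplus kv_0$ is verified explicitly, and the conclusion follows since the two functionals agree on a spanning set. One minor observation: as you half-note yourself, finite-dimensionality of $V$ is never actually used --- the explicit formula $v=\bigl(v-\tfrac{\varphi_1(v)}{\varphi_1(v_0)}v_0\bigr)+\tfrac{\varphi_1(v)}{\varphi_1(v_0)}v_0$ exhibits the complement without any dimension count, so the lemma holds for arbitrary vector spaces. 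The hypothesis is harmless here since the lemma is only applied to $\F_p^3$ in Lemma~\ref{lem:calculation}.
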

\begin{lem} 
\label{lem:calculation}
Let 
\[\sG=\left\{ \begin{bmatrix} 
1 & 0 & a\\
0 & 1 & b\\
0& 0 & 1
\end{bmatrix}:
a,b\in \F_p
\right\}, 
\]
and let $\F_p^3$ act on $\sG$ by matrix multiplication.
Then $H^1_*(\sG,(\F_p)^3)=0$.
\end{lem}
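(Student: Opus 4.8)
The plan is to compute directly with inhomogeneous $1$-cocycles. Regard $M=(\F_p)^3$ as column vectors on which $\sG$ acts by left matrix multiplication, and put $M_0=\{(r,s,0)^T:r,s\in\F_p\}$, which is precisely $M^{\sG}$. Let $u=I+E_{13}$ and $v=I+E_{23}$, so that $\sG=\langle u,v\rangle$ is elementary abelian of order $p^2$ and $g_{a,b}:=I+aE_{13}+bE_{23}=u^av^b$. Since $\sG$ is finite, $H^1_*(\sG,M)$ is the intersection of the kernels of the restriction maps to the subgroups of order $p$ (restriction to the trivial subgroup is vacuous), and these are exactly the subgroups $C_{a,b}:=\langle g_{a,b}\rangle$ with $(a,b)\in\F_p^2\setminus\{0\}$, of which there are $p+1$ (as $C_{a,b}=C_{\lambda a,\lambda b}$ for $\lambda\in\F_p^\times$).

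First I would pin down $Z^1(\sG,M)$. A $1$-cocycle is a derivation, hence determined by the pair $(c(u),c(v))\in M^2$, and such a pair extends to a well-defined cocycle exactly when the relator words $u^p$, $v^p$, $[u,v]$ evaluate to the identity in $M\rtimes\sG$ under $u\mapsto(c(u),u)$, $v\mapsto(c(v),v)$. Using $(u-1)(r,s,t)^T=(t,0,0)^T$, $(v-1)(r,s,t)^T=(0,t,0)^T$ and $\bigl(\sum_{i=0}^{p-1}u^i\bigr)(r,s,t)^T=(\binom{p}{2}t,0,0)^T$, one checks that this happens iff $c(u),c(v)\in M_0$; and then, since $u$ and $v$ act trivially on $M_0$, one gets $c(g_{a,b})=a\,c(u)+b\,c(v)$. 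I would also record that the coboundaries are precisely the cocycles with $c(u)=(t,0,0)^T$ and $c(v)=(0,t,0)^T$ for a common $t\in\F_p$.

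Next comes the restriction criterion: $(g_{a,b}-1)M=\F_p\cdot(a,b,0)^T$, so $c|_{C_{a,b}}$ is a coboundary of $C_{a,b}$ iff $c(g_{a,b})=a\,c(u)+b\,c(v)\in\F_p\cdot(a,b,0)^T$. Thus if $[c]\in H^1_*(\sG,M)$ and we write $c(u)=(x_1,x_2,0)^T$, $c(v)=(y_1,y_2,0)^T$, then for every $(a,b)\neq(0,0)$ there is $\lambda\in\F_p$ with $ax_1+by_1=\lambda a$ and $ax_2+by_2=\lambda b$; eliminating $\lambda$ yields $ab\,x_1+b^2y_1=a^2x_2+ab\,y_2$ for all $(a,b)\in\F_p^2$. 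Evaluating at $(1,0),(0,1),(1,1)$ forces $x_2=0$, $y_1=0$ and $x_1=y_2$, i.e. $c$ is a coboundary, so $[c]=0$; hence $H^1_*(\sG,(\F_p)^3)=0$.

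The one place needing care is the first step — verifying that there are no constraints on $(c(u),c(v))$ beyond lying in $M_0$, which is exactly the statement that the relators of $\sG$ map to $1$ in $M\rtimes\sG$ — together with the prime $p=2$, where $\sum_{i=0}^{p-1}u^i$ no longer acts as $0$ on $M$: there the constraint $c(u),c(v)\in M_0$ comes (in part) from the relation $u^2=1$ rather than from $[u,v]=1$, but the outcome is identical, and the concluding evaluation is unchanged since a degree-$2$ expression in two variables over $\F_2$ is determined by its values at the three nonzero points. Everything else is routine linear algebra.
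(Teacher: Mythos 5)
Your proof is correct and is essentially the same argument as the paper's: both reduce the condition defining $H^1_*(\sG,(\F_p)^3)$ to the pointwise proportionality $c(g_{a,b})\in\F_p\cdot(a,b,0)^T$ and then show by a short linear-algebra computation that this forces the cocycle to be the coboundary of a vector $(0,0,\lambda)^T$. The only cosmetic differences are that you first pin down $Z^1$ and $B^1$ via generators and relators and eliminate $\lambda$ through a determinant identity evaluated at $(1,0),(0,1),(1,1)$, whereas the paper works with an arbitrary cocycle directly and invokes its Lemma~\ref{lem:linear algebra} on kernel containment of linear functionals before evaluating at the single element $\sigma_0$ with $a_{\sigma_0}=b_{\sigma_0}=1$.
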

\begin{proof}
Let $(Z_\sigma)$ be a cocycle representing an element in $H^1_*(\sG,(\F_p)^3)$. Then for each $\sigma \in \sG$, there exists $W_\sigma\in (\F_p)^3$ such that
\[ Z_\sigma= (\sigma-1)W_\sigma.
\]
Writing $Z_\sigma=\begin{bmatrix} x_\sigma\\ y_\sigma\\z_\sigma \end{bmatrix}$, $W_\sigma=\begin{bmatrix} u_\sigma\\ v_\sigma \\ t_\sigma \end{bmatrix}$
 and 
$\sigma= \begin{bmatrix}
1 & 0 & a_\sigma\\
0 & 1 & b_\sigma\\
0 & 0 & 1
\end{bmatrix},$ we have
\[
\begin{bmatrix} x_\sigma\\ y_\sigma\\z_\sigma \end{bmatrix} =
 \begin{bmatrix}
0 & 0 & a_\sigma\\
0 & 0 & b_\sigma\\
0 & 0 & 0
\end{bmatrix}
\begin{bmatrix} u_\sigma\\ v_\sigma \\ t_\sigma \end{bmatrix}
= \begin{bmatrix} t_\sigma a_\sigma \\ t_\sigma b_\sigma \\ 0 \end{bmatrix}.
\] 
Hence 
\[
\label{eq:1}
\tag{1}
x_{\sigma}=t_\sigma a_\sigma, y_\sigma= t_\sigma b_\sigma, z_\sigma=0.
\]
 By the cocycle condition, $\sigma\mapsto x_\sigma$ and $\sigma\mapsto y_\sigma$ are homomorphisms. Also, $\sigma\mapsto a_\sigma$ and $\sigma\mapsto b_{\sigma}$ are homomorphisms.
From (\ref{eq:1}), one has $\ker a_\sigma\subseteq \ker x_\sigma$ and $\ker b_\sigma\subseteq \ker y_\sigma$. 
Hence by Lemma~\ref{lem:linear algebra}, there exist $\lambda,\mu\in \F_p$ such that 
\[
\label{eq:2}
\tag{2}
x_\sigma= \lambda a_\sigma; y_\sigma= \mu b_\sigma.
\]
We consider the matrix $\sigma_0= \begin{bmatrix}
1 & 0 & 1\\
0 & 1 & 1\\
0 & 0 & 1
\end{bmatrix},$ i.e., $a_{\sigma_0}=b_{\sigma_0}=1$. Then (\ref{eq:1}) and (\ref{eq:2}) imply that
\[
x_{\sigma_0}=t_{\sigma_0}=\lambda, \text{ and } y_{\sigma_0}=t_{\sigma_0}=\mu.
\]
Thus $\lambda=\mu$. Hence  for all $\sigma \in \sG$ we have $Z_\sigma=(\sigma-1) W$, with $W=(0,0,\lambda)^t$. Therefore $(Z_\sigma)$ is cohomologous to 0, as desired.
\end{proof}

\section{The injectivity of a localization map}
Let $K$ be a global field containing a primitive $p$-th root of unity.
 For any $G_K$-module $M$ with the structure map $\rho\colon G_K\to {\rm Aut}(M)$, let $K(M)$ be the smallest splitting field of $M$, explicitly $K(M)$ is the fixed field of the separable  closure $K^{\rm sep}$ under $\ker(\rho)$. 
 For each prime $v$ of $K$, let $K_v$ denote the completion of $K$ at $v$. We will fix an embedding $\iota_v\colon G_{K_v}\hookrightarrow G_K$ which is induced by choosing an embedding of $K^{\rm sep}$ in $K_v^{\rm sep}$. Then for each $i$, $\iota_v$'s induce a homomorphism 
\[
\beta^1(K,M)\colon H^i(G_K,M)\to \prod_{v}H^i(G_{K_v},M).
\] 
This homomorphism does not depend on the choice of  embeddings  $K^{\rm sep}\hookrightarrow K_v^{\rm sep}$, and it is called the {\it localization map}.

\begin{lem}
\label{lem:1}
 Let $F$ be a finite Galois extension of $K$ containing $K(M)$. Then we can inject the group $\ker \beta^1(K,M)$ into the group $H^1_*({\rm Gal}(F/K),M)$.
\end{lem}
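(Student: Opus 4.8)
The plan is to exhibit the required injection as the partial inverse of an inflation map. Since $F$ contains $K(M)$, the group $G_F$ acts trivially on $M$; hence $M^{G_F}=M$, and the inflation--restriction sequence attached to $1\to G_F\to G_K\to \Gal(F/K)\to 1$ begins
\[
0\longrightarrow H^1(\Gal(F/K),M)\xrightarrow{\ \mathrm{inf}\ } H^1(G_K,M)\xrightarrow{\ \mathrm{res}\ } H^1(G_F,M),
\]
so $\mathrm{inf}$ is injective with image $\ker(\mathrm{res})$. I will show: (i) every $c\in\ker\beta^1(K,M)$ satisfies $c|_{G_F}=0$, i.e.\ lies in the image of $\mathrm{inf}$; and (ii) the unique class $\bar c\in H^1(\Gal(F/K),M)$ with $\mathrm{inf}(\bar c)=c$ then restricts to $0$ on every cyclic subgroup of $\Gal(F/K)$, i.e.\ $\bar c\in H^1_*(\Gal(F/K),M)$. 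Granting (i) and (ii), $c\mapsto\bar c$ is a well-defined homomorphism $\ker\beta^1(K,M)\to H^1_*(\Gal(F/K),M)$, and it is injective because $\mathrm{inf}$ is.

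For (i): let $c\in\ker\beta^1(K,M)$. Since the localization map does not depend on the chosen embeddings $K^{\mathrm{sep}}\inj K_v^{\mathrm{sep}}$, the restriction of $c$ to every decomposition subgroup of $G_K$ vanishes; in particular $c|_{G_{F_w}}=0$ for every place $w$ of $F$, where $G_{F_w}$ is viewed inside $G_F$ via a compatible embedding. As $G_F$ acts trivially on $M$, the class $\phi:=c|_{G_F}$ is an honest homomorphism $G_F\to M$, and the finite abelian extension of $F$ that $\phi$ cuts out splits completely at every place of $F$; by the Chebotarev density theorem that extension is trivial, so $\phi=0$. By the displayed exact sequence, $c=\mathrm{inf}(\bar c)$ for a unique $\bar c\in H^1(\Gal(F/K),M)$.

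For (ii): fix a cyclic subgroup $C=\langle\tau\rangle$ of $\Gal(F/K)$. By the Chebotarev density theorem there is a place $v$ of $K$, unramified in $F$, whose Frobenius conjugacy class in $\Gal(F/K)$ is the class of $\tau$; then the image $D$ of the composite $\pi_v\colon G_{K_v}\xrightarrow{\ \iota_v\ }G_K\to\Gal(F/K)$ is the cyclic group generated by a representative of that class, hence conjugate to $C$. Since whether $\bar c$ restricts to $0$ on a subgroup depends only on its conjugacy class, it suffices to show $\bar c|_D=0$. Now $\iota_v$ carries $\ker\pi_v$ into $G_F$, so $\ker\pi_v$ acts trivially on $M$, and inflation--restriction for $1\to\ker\pi_v\to G_{K_v}\to D\to 1$ shows that $\mathrm{inf}_{\pi_v}\colon H^1(D,M)\to H^1(G_{K_v},M)$ is injective. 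Using the commutative square relating $\mathrm{inf}$, the restriction $H^1(\Gal(F/K),M)\to H^1(D,M)$, $\mathrm{inf}_{\pi_v}$ and $\iota_v^*$, we obtain
\[
\mathrm{inf}_{\pi_v}\big(\bar c|_D\big)=\iota_v^*\big(\mathrm{inf}(\bar c)\big)=c|_{G_{K_v}}=0,
\]
so $\bar c|_D=0$, hence $\bar c|_C=0$. As $C$ was arbitrary, $\bar c\in H^1_*(\Gal(F/K),M)$, completing the proof.

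The routine ingredients are the two applications of inflation--restriction and the compatibility of the inflation/restriction maps with the local embeddings $G_{K_v}\inj G_K$. The only substantive input is the Chebotarev density theorem, applied once to force $c|_{G_F}=0$ and once to realize each cyclic subgroup of $\Gal(F/K)$ as a decomposition group. The point needing the most care is the role of the embeddings $K^{\mathrm{sep}}\inj K_v^{\mathrm{sep}}$: what is genuinely used in both steps is that $\ker\beta^1(K,M)$ consists precisely of the classes whose restriction to every decomposition subgroup of $G_K$ vanishes, which is exactly why the independence of $\beta^1(K,M)$ on these choices must be invoked.
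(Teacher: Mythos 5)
Your proof is correct, and it arrives at exactly the same injection as the paper --- the inverse of the (injective) inflation map $H^1(\Gal(F/K),M)\to H^1(G_K,M)$, restricted to $\ker\beta^1(K,M)$ --- but by a genuinely different route: the paper simply cites \cite[Chapter I, Lemma 9.3]{Mi} and \cite[Lemma 1]{Ja} to obtain the chain $\ker\beta^1(K,M)\inj H^1_*(G_K,M)\simeq H^1_*(\Gal(F/K),M)$, whereas you reprove the two facts hiding behind those citations from scratch. Your step (i) (a locally trivial class is inflated from $\Gal(F/K)$, via Chebotarev applied to the homomorphism $c|_{G_F}\colon G_F\to M$) is essentially the content of the isomorphism $H^1_*(\Gal(F/K),M)\simeq H^1_*(G_K,M)$ in the restricted form needed here, and your step (ii) (realizing each cyclic subgroup of $\Gal(F/K)$ as a decomposition group at an unramified place, again by Chebotarev) is the content of Jannsen's lemma. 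Both of your Chebotarev applications, the two inflation--restriction arguments, and the conjugation-invariance points (needed because only one embedding $K^{\rm sep}\inj K_v^{\rm sep}$ is fixed per place $v$) are handled correctly, including the identification of the image of $G_{K_v}\to\Gal(F/K)$ with the cyclic group generated by Frobenius at an unramified $v$. What your version buys is self-containedness and transparency about where arithmetic input (Chebotarev/Frobenius density, valid for both number fields and global function fields) actually enters; what the paper's version buys is brevity and a statement at the level of generality of the cited references.
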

(See \cite[Proposition 8]{Se} for a similar statement.)
\begin{proof}

By \cite[Chapter I, Lemma 9.3]{Mi} and/or \cite[Lemma 1]{Ja}, we have the following diagram
\[
\xymatrix
{
&\ker\beta^1(K,M) \ar@{^{(}->}[d]\\
H^1_*(\Gal(F/K),M)\ar@{->}[r]^\simeq &H^1_*(G_K,M).
}
\]
The lemma then follows.
\end{proof}

Now let $\alpha \colon G_K\to \F_p^3$ be any (continuous) homomorphism. We consider $A$ as a $G_K$-module via 
\[
\psi\circ \alpha \colon G_K\stackrel{\alpha}{\to} \F_p^3\stackrel{\psi}{\to} {\rm Aut}(A).\]

\begin{lem}
\label{lem:local-global}
The localization map 
\[ 
H^2(G_K,A)\to \prod_{v} H^2(G_{K_v},A),
 \]
 is injective.
\end{lem}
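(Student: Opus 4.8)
The plan is to reduce the injectivity of the localization map on $H^2(G_K,A)$ to a statement about $H^1$ of a finite group, which we have already computed, by exploiting the low cohomological dimension of global fields together with Poitou–Tate duality. Since $A$ is a finite $\F_p[\F_p^3]$-module (of order $p^3$), it is self-dual up to a twist; more precisely, by Lemma~\ref{lem:explicit} the dual module $A^\prime = \Hom(A,\F_p)$ has a very explicit description as an $\F_p^3$-module, and one checks directly from the matrix formulas that $A^\prime$ is isomorphic, as an $\F_p^3$-module, to a module of the same shape as $\sG$ in Lemma~\ref{lem:calculation} (the unipotent $3\times 3$ picture with $\F_p^3$ acting by the displayed matrices). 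Here I would first handle the case $\Char K \ne p$ and address $\Char K = p$ separately (in characteristic $p$ one would instead use that the only relevant prime is $p$ itself and invoke the appropriate vanishing, or cite that the $p$-cohomological dimension of a global function field of characteristic $p$ is $1$, making $H^2$ of the relevant modules vanish outright or reducing the claim to a triviality).

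In characteristic $\ne p$, the key step is the Poitou–Tate nine-term exact sequence. Applied to the finite $G_K$-module $A$, it gives an exact sequence relating $H^2(G_K,A)$, the product $\prod_v H^2(G_{K_v},A)$ (restricted product, but for finite modules and $i=2$ this is the full product up to the archimedean corrections, which vanish for $p$ odd and are handled directly for $p=2$), and $H^1(G_K, A^\prime)^\vee$ (Pontryagin dual), where $A^\prime = \Hom(A,\mu_p)\cong \Hom(A,\F_p)$ since $\mu_p\subseteq K$. Concretely, the cokernel of $H^2(G_K,A)\to \prod_v H^2(G_{K_v},A)$ is dual to $H^0(G_K,A^\prime)$ and the \emph{kernel} $\Sha^2(A):=\ker\beta^2(K,A)$ is dual to $\Sha^1(A^\prime):=\ker\beta^1(K,A^\prime)$. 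So it suffices to prove $\Sha^1(K,A^\prime)=0$.

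Now I would invoke Lemma~\ref{lem:1}: taking $F = K(A^\prime)$ (a finite Galois extension of $K$ whose Galois group is a subgroup of $\F_p^3$, in fact isomorphic to the image of $\psi^\prime\circ\alpha$, which by Lemma~\ref{lem:explicit} lands in the $3\times 3$ unipotent group), we get an injection
\[
\Sha^1(K,A^\prime)=\ker\beta^1(K,A^\prime)\ \hookrightarrow\ H^1_*\bigl(\Gal(F/K),A^\prime\bigr).
\]
Since $\Gal(F/K)$ is a quotient of $\F_p^3$ acting on $A^\prime$ through the map of Lemma~\ref{lem:explicit}, and since $H^1_*$ can only grow when passing to a quotient group in a controlled way — or, more cleanly, since the module $A^\prime$ with its $\F_p^3$-action is exactly (isomorphic to) the module $\sG$ with $\F_p^3$ acting by matrix multiplication — Lemma~\ref{lem:calculation} gives $H^1_*(\F_p^3, A^\prime)=0$, and the same computation restricted to the subgroup $\Gal(F/K)\le \F_p^3$ (or done directly, noting the proof of Lemma~\ref{lem:calculation} only used that various maps $\sigma\mapsto a_\sigma$ are homomorphisms and the linear-algebra Lemma~\ref{lem:linear algebra}) yields $H^1_*(\Gal(F/K),A^\prime)=0$. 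Hence $\Sha^1(K,A^\prime)=0$, and therefore $\Sha^2(K,A)=0$, which is exactly the asserted injectivity.

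The main obstacle I anticipate is bookkeeping with the duality: getting the twist right (identifying $A^\prime=\Hom(A,\F_p)$ with the Cartier/Tate dual $\Hom(A,\mu_p)$, which is legitimate precisely because $\zeta_p\in K$), correctly matching the $\F_p^3$-module $A^\prime$ from Lemma~\ref{lem:explicit} with the module $\sG$ of Lemma~\ref{lem:calculation} (one should exhibit an explicit $\F_p$-linear isomorphism intertwining the two actions — comparing the displayed matrices, $A^\prime$ is $\sG$ after a relabelling and a sign change $x\mapsto -x$, which is harmless), and handling the archimedean places and the $p=2$ case in the Poitou–Tate sequence. A secondary subtlety is ensuring the passage $H^1_*(G_K,A^\prime)\cong H^1_*(\Gal(F/K),A^\prime)$ and then the restriction to the relevant finite quotient really does land inside the vanishing group of Lemma~\ref{lem:calculation}; since every finite quotient of $\F_p^3$ through which the action factors is again an elementary abelian $p$-group with the induced unipotent action, the computation in Lemma~\ref{lem:calculation} applies verbatim.
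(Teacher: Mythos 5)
Your proposal is correct and follows essentially the same route as the paper: Poitou--Tate duality reduces the claim to $\ker\beta^1(K,A^\prime)=0$, which is then injected via Lemma~\ref{lem:1} into $H^1_*(\Gal(F/K),A^\prime)$ with $F=K(A^\prime)$ and killed by Lemma~\ref{lem:calculation}. The only step to tighten is your assertion that the computation of Lemma~\ref{lem:calculation} applies \emph{verbatim} when $\Gal(F/K)$ is a proper subgroup of $\sG$ --- that proof evaluates at the element $\sigma_0$ with $a_{\sigma_0}=b_{\sigma_0}=1$, which such a subgroup need not contain; the paper instead observes that any proper subgroup of $\sG$ has order dividing $p$, hence is cyclic, so $H^1_*$ vanishes for trivial reasons.
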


\begin{proof}
First note that if we consider $A^\prime={\rm Hom}(A,\F_p)$ as a $G_K$-module via the composition map $ \beta=\psi^\prime \circ \alpha \colon G_K\to \F_p^3\stackrel{\psi^\prime}{\to} {\rm Aut}(A^\prime)$, then $A^\prime$ is the dual $G_K$-module of the $G_K$-module $A$. We shall choose an $\F_p$-basis of  $A^\prime$ as in Lemma~\ref{lem:calculation}. Clearly, after identifying $A^\prime$ with $\F_p^3$, and ${\rm Aut}(A^\prime)$ with ${\rm GL}_3(\F_p)$, the action of $G_K$ on $A^\prime$ via the image $\im(\beta)$ is the matrix multiplication.

By Poitou-Tate duality (\cite[Theorem 8.6.7]{NSW}), it is enough to show that 
\[
\label{eq:3}
\tag{3}
\ker(H^1(G_K,A^\prime)\to \prod_{v} H^1(G_{K_v},A^\prime))=0.
\] 

Let $F= (K^{\rm sep})^{\ker\beta}$ be the smallest splitting field of $A^\prime$. 
Then $\Gal(F/K)\simeq\im (\beta)\subseteq \im\psi^\prime=\sG$, where $\sG$ is the group defined in Lemma~\ref{lem:calculation}. Here the equality $\im\psi^\prime=\sG$ follows from Lemma~\ref{lem:explicit}.

If $\Gal(F/K)\simeq \im\beta=\sG$, then by Lemma~\ref{lem:calculation},  $H^1_*(\Gal(F/K),A^\prime)=0$.
If $\Gal(F/K)\simeq \im \beta \not= \sG$, then $\Gal(F/K)$ is of order dividing $p$ because $|\sG|=p^2$. Thus ${\rm Gal}(F/K)$ is cyclic. In this case, it is clear that $H^1_*(\Gal(F/K),A^\prime)=0$.
Thus in all cases we have $H^1_*(\Gal(F/K),A^\prime)=0$. Therefore  Lemma~\ref{lem:1} implies that (\ref{eq:3}) is true, as desired. 
\end{proof}


\section{Triple Massey products over local and global fields}

Recall that  a pro-$p$-group $G$ is call a Demushkin group if its cohomology $H^i(G,\F_p)$ has the following properties: (1) $\dim_{\F_p}H^1(G,\F_p)<\infty$, (2) $\dim_{\F_p}H^2(G,\F_p)=1$ and (3) the cup product $H^1(G,\F_p)\times H^1(G,\F_p)\to H^2(G,\F_p)$ is non-degenerate.
\begin{thm}
\label{thm:local}
 Let $K$ be a local field containing a primitive $p$-th root of unity. Let $n$ be an integer greater than 2. Then every  $n$-fold Massey product  contains 0 whenever it is defined.
\end{thm}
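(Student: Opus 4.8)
The plan is to reduce the statement to a purely local cohomological fact about Demushkin groups, using the machinery already assembled. Recall that by Lemma~\ref{lem:vanishingEP} it suffices to show that for every homomorphism $\bar\rho\colon G_K\to \bar\U_{n+1}(\F_p)$ the associated weak embedding problem with kernel $A' := \ker(\U_{n+1}(\F_p)\to\F_p^n)$ admits a weak solution, and by Lemma~\ref{lem:Hoechsmann} this is equivalent to the vanishing of $\alpha^*(\epsilon)\in H^2(G_K, A')$, where $\alpha=(\bar\rho_{12},\dots,\bar\rho_{n,n+1})\colon G_K\to\F_p^n$ and $A'$ carries the conjugation action pulled back along $\alpha$. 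So the goal becomes: show this particular degree-$2$ class dies. The key structural input is that the absolute Galois group $G_K$ of a local field is (after passing to the maximal pro-$p$ quotient, which is all that matters for $\F_p$-coefficients and $p$-group targets) a Demushkin group when $\mu_p\subset K$, together with the fact that $\F_p$ is algebraically closed for the relevant Brauer-type obstructions.

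First I would handle the case $p\mid n$ or related degenerate cases trivially, then set up the main argument: choose a defining system, equivalently a homomorphism $\bar\rho$, and consider the obstruction class $\epsilon_{\bar\rho}\in H^2(G_K,A')$. The plan is to filter the module $A'$ by $G_K$-submodules with successive quotients isomorphic to $\F_p$ with \emph{trivial} action (this works because the unipotent conjugation action is, well, unipotent, so every composition factor is trivial), and to run a dévissage: the long exact sequences in cohomology reduce the vanishing of the obstruction to statements in $H^2(G_K,\F_p)$. Since $K$ is local, $H^2(G_K,\F_p)\cong\F_p$ is one-dimensional and the cup product pairing $H^1\times H^1\to H^2$ is non-degenerate (the Demushkin property). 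One then exploits the indeterminacy of the Massey product: the hypotheses forcing the product to be defined give $\chi_{a_i}\cup\chi_{a_{i+1}}=0$, and non-degeneracy of the cup product lets one adjust the defining system — i.e. move within the set of possible $\bar\rho$'s lifting the given characters — to kill the successive obstruction classes one graded piece at a time. This is exactly the mechanism Hopkins--Wickelgren use locally in the $p=2$ case, generalized.

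The main obstacle I anticipate is the bookkeeping in the dévissage: $H^1(G_K,\F_p)$ is typically more than two-dimensional for a local field, so "non-degeneracy" must be used carefully — one needs that for any prescribed value $\gamma\in H^2(G_K,\F_p)$ and any \emph{nonzero} $\chi\in H^1$ there is $\eta\in H^1$ with $\chi\cup\eta=\gamma$, which is precisely non-degeneracy, but one must check that the character one is cupping against is nonzero at each inductive step, i.e. that the relevant $\bar\rho_{i,i+1}$ doesn't vanish — and if it does, that piece of the obstruction is automatically zero. A second delicate point is the characteristic-$p$ local fields (function field case): there $G_K$ is still a Demushkin-type group with $\dim H^2=1$ and non-degenerate cup product, provided $\mu_p\subset K$, so the same argument applies, but one should cite the appropriate structure theorem (Labute, Serre) rather than assume it. A third, smaller issue: when $n>3$ one uses induction on $n$, peeling off the top character and invoking the result for $\U_n$; the base case $n=3$ is where the cup-product non-degeneracy does the real work, and the inductive step is formal once the $n=3$ mechanism is in place. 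I expect the write-up to proceed: (i) reduce to $n=3$ by induction; (ii) translate to the embedding problem / obstruction class via Dwyer and Hoechsmann; (iii) filter $A'$ and run the dévissage; (iv) invoke the Demushkin non-degeneracy to absorb each graded obstruction into the Massey indeterminacy.
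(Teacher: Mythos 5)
Your opening reduction (Lemma~\ref{lem:vanishingEP} plus Hoechsmann) is consistent with the paper's framework, but the paper's actual proof of Theorem~\ref{thm:local} is much shorter than what you propose: it passes to the maximal pro-$p$ quotient $G=G_K(p)$, disposes of $K\simeq\C$ and of $K\simeq\R$ with $p\neq 2$ (where $G$ is trivial), observes via \cite[Prop.\ 3.9.10, Prop.\ 7.5.9, Thm.\ 7.5.11]{NSW} that in all remaining cases $G$ is a Demushkin group, and then simply cites \cite[Theorem 4.3]{MT}, which asserts that Demushkin groups have the vanishing $n$-fold Massey product property for every $n\ge 3$. You are instead trying to reprove that cited theorem, and the plan has genuine gaps exactly where the work is.

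The most serious gap is the claim that ``when $n>3$ one uses induction on $n$ \dots the inductive step is formal once the $n=3$ mechanism is in place.'' This is false as stated: vanishing of $n$-fold Massey products is not a formal consequence of vanishing of triple (or $(n-1)$-fold) ones --- that implication is essentially the content of the then-open Massey vanishing conjecture --- and the proof of \cite[Theorem 4.3]{MT} for general $n$ is not an induction from the triple case. A second gap is the d\'evissage itself. You conflate two different embedding problems: the central extension $0\to\F_p\to\U_{n+1}(\F_p)\to\bar\U_{n+1}(\F_p)\to 1$ of Dwyer's theorem, where the value of the Massey product lives in $H^2(G,\F_p)$ and the indeterminacy $a_1\cup H^1+H^1\cup a_n$ together with non-degeneracy and $\dim H^2=1$ can absorb the obstruction (provided $a_1\neq 0$ or $a_n\neq 0$ --- the case $a_1=a_n=0$ needs a separate argument you do not supply); and the non-central extension $0\to A\to\U_{n+1}(\F_p)\to\F_p^n\to 1$ of Lemma~\ref{lem:vanishingEP}, whose kernel has dimension $n(n-1)/2$. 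Filtering that kernel by trivial quotients produces intermediate obstruction classes in $H^2(G,\F_p)\cong\F_p\neq 0$ which depend on the lifts chosen at earlier stages, and ``absorbing each graded piece into the Massey indeterminacy'' is precisely the step that is asserted but never justified; the fallback ``if $\bar\rho_{i,i+1}=0$ then that piece of the obstruction is automatically zero'' is also unjustified. Finally, a minor omission: the archimedean local fields (in particular $\R$ with $p=2$, where $H^1(G,\F_2)$ is one-dimensional and the adjustment argument degenerates) are part of the statement and are not addressed.
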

\begin{proof}
Let $G=G_K(p)$ be the maximal pro-$p$ quotient of the absolute Galois group of $K$. If either $K\simeq \C$  or $K\simeq \R$ and  $p\not=2$, then $G$ is trivial. Clearly then $G$ has the vanishing $n$-fold Massey product property. 

If $K\simeq \R$ and $p=2$ then $G\simeq \Z/2\Z$, which is a Demushkin group by   \cite[Proposition 3.9.10]{NSW}. 
Now assume that $K$ is not isomorphic to either $\R$ or $\C$, then by \cite[Proposition 7.5.9 and Theorem 7.5.11]{NSW}, $G$ is also a Demushkin group.   
 Hence,  in the both main cases when $G$ is non-trivial,  $G$ has the vanishing$n$-fold Massey product property by \cite[Theorem 4.3]{MT}.
\end{proof}

\begin{proof}[Proof of Theorem~\ref{thm:main}]
Theorem~\ref{thm:main} follows from  Proposition~\ref{prop:local-global}, Lemma~\ref{lem:local-global} and Theorem~\ref{thm:local}.
\end{proof}
\begin{proof}[Proof of Corollary~\ref{cor:U4}]
Corollary~\ref{cor:U4} follows from Theorems~\ref{thm:local}-\ref{thm:main} and Corollary~\ref{cor:surj to U4}.
\end{proof}

\end{document}